\theoremstyle{plain}
\newtheorem{theorem}{Theorem}[section]
\theoremstyle{plain}
\newtheorem{lemma}[theorem]{Lemma}
\theoremstyle{definition}
\newtheorem{definition}{Definition}[section]
\theoremstyle{remark}
\newtheorem{remark}{Remark}[section]
\numberwithin{equation}{section}
\newcommand{\e}{\varepsilon}
\newcommand{\bx}{\bar x}
\newcommand{\by}{\bar y}
\newcommand{\A}{\ensuremath{\mathcal{A}}}
\newcommand{\eps}{\ensuremath{\varepsilon}}
\newcommand{\R}{\ensuremath{\mathbb{R}}}
\newcommand{\N}{\ensuremath{\mathbb{N}}}
\newcommand{\del}{\partial}
\newcommand{\ol}{\overline}
\newcommand{\ra}{\rightarrow}
\newcommand{\alp}{\alpha}
\newcommand{\tr}{\mathrm{tr}}
\begin{document}

\title[Continuous dependence]{Continuous dependence results for Non-linear
  Neumann type boundary value problems}
 
\author[E. R. Jakobsen and C. A. Georgelin]
{Espen R. Jakobsen and Christine A. Georgelin }

\address{Espen R. Jakobsen \newline
    Department of Mathematical Sciences \newline 
    Norwegian University of Science and Technology \newline
    7491 Trondheim, NORWAY }
\email{erj@math.ntnu.no}
\urladdr{http://www.math.ntnu.no/$\sim$erj} 

\address{Christine Georgelin \newline
    Laboratoire de Math\'ematiques et Physique
Th\'eorique Facult\'e des Sciences et Techniques, \newline
F\'ed\'eration Denis Poisson\newline
 Universit\'e de
Tours, Parc de Grandmont \newline
 37200 Tours,  FRANCE  }
\email{christine.georgelin@univ-tours.fr }
\urladdr{http://www.lmpt.univ-tours/$\sim$georgeli}

\date{\today} 

\thanks{Submitted March 26, 2007. accepted July,  2008.}  \thanks {
C. Georgelin was partially supported by the ACI {\sl Mouvements
d'interfaces avec termes non-locaux}, the Department of
Mathematical Sciences at NTNU, and both authors have been supported by
project 176877/V30 of the Research Council of Norway.}

\keywords{Hamilton-Jacobi-Bellman Equations, vanishing viscosity
  method, continuous dependence, boundary value problems, degenerate
  equations, nonlinear PDEs, nonlinear boundary value problems,
  viscosity solutions}

\subjclass[2000]{
35J25, 
35J60, 
35J70, 
49L25. 
}

\begin{abstract}
We obtain estimates on the continuous dependence on the coefficient
for second order non-linear degenerate Neumann type boundary value 
problems. Our results extend previous work of Cockburn
et.al., Jakobsen-Karlsen, and Gripenberg to problems with more general
boundary conditions and domains. 
A new feature here is that we account for the dependence on the
boundary conditions. As one application of our continuous dependence
results, we derive for the first time the rate of convergence for the
vanishing viscosity method for such problems. We also derive new
explicit continuous dependence on the coefficients results for
problems involving Bellman-Isaacs equations and certain quasilinear
equation.
\end{abstract}

\maketitle

\section{Introduction}
\label{sec:intro}

In this paper we will derive continuous dependence estimates for the
following boundary value problem:
\begin{align}
\label{EE}
F(x,u,Du,D^2u)&=0\qquad\text{in}\quad \Omega \quad (\Omega\subset\R^N),\\
G(x,Du)&=0\qquad\text{on}\quad\del\Omega,
\label{BV}
\end{align}
where $u$ is the scalar unknown function, $Du$ and $D^2u$ denote its the gradient
and Hessian, and $\Omega$ is a bounded, smooth ($W^{3,\infty}$) domain in
$\R^N$. Informally speaking, by continuous dependence
estimates we mean estimates of the type 
$$\|u_1-u_2\|\leq \|F_1-F_2\| + \|G_1-G_2\|$$ 
where $u_1$ and $u_2$ are solutions of two different boundary value
problem with data $F_1,G_1$  and $F_2,G_2$. The exact statement is
given in Section \ref{sec:results}.

Equation \eqref{EE} is degenerate elliptic, (possibly)
non-linear, and increasing in $u$. 
This means that the possibly non-linear function $F(x,r,p,X)$ satisfies
$$F(x,r,p,X)\leq F(x,s,p,Y)\quad\text{for all}\quad r\leq s,\quad X\geq Y,$$
where $x \in\Omega$, $r,s\in\R$, $p\in\R^N$, and $X,Y\in \mathbb
S^N$. Here $\mathbb S^N$ is the set of real symmetric $N\times N$ matrices and
$X\geq 0$ in $\mathbb S^N$ means that $X$ is positive semi-definite.
The boundary condition \eqref{BV} 
satisfies the Neumann type condition that $G$ is strictly increasing in $p$ in
 the normal direction:
\begin{align*}
G(x,p+tn(x))\geq G(x,p)+\nu t,
\end{align*}
for some $\nu>0$ and all $t\geq0$, $x\in \del\Omega$, $p$, and outward
unit normal vectors $n(x)$ to $\del\Omega$ at $x$. The
other assumptions on $F$ and $G$ will be specified later.

The class of boundary conditions $G$ we treat in this paper includes
the classical Neumann  condition, $\frac{\del u}{\del n}=g(x)$ in
$\del\Omega$, oblique derivative conditions, and non-linear 
boundary conditions like the capillary condition 
\begin{align*}
&\frac{\del u}{\del n}=\bar\theta(x)(1+|Du|^2)^{1/2}\hspace{-2cm}&
    \text{in}\qquad 
    \del\Omega,\\
\intertext{and the  controlled reflection
condition} 
&\sup_{\alp\in\A}\{\gamma_\alp(x)\cdot Du
-g_\alp(x)\}=0\hspace{-2cm}&\text{in}\qquad \del \Omega.
\end{align*}
In this paper we will assume that $g$, $\bar \theta$, $g_\alp$,
$\gamma_\alp$ are Lipschitz continuous 
functions, that $|\bar \theta|\leq \omega<1$ and $\gamma_\alp\cdot  
n\geq \nu>0$, and that $A$ is a compact metric space.

The main class of equations that our framework can handle are
equations satisfying assumption (H2) in the next section. Loosely
speaking, this is the class of equations where the non-linearity
$F(x,r,p,X)$ is uniformly continuous in $r,p,X$ locally uniformly in $x$.
This case will be referred to as the ``standard case'' in the rest of
this paper. Assumption (H2) excludes most of quasilinear equations,
but contains fully-nonlinear equations like the Bellman-Isaacs
equations from optimal stochastic control and stochastic differential
games theory:
\begin{align*}
\inf_{\theta_1\in \Theta_1}\sup_{\theta_2\in \Theta_2}\left\{-
\tr[a^{\theta_1,\theta_2}(x) D^2u] - b^{\theta_1,\theta_2}(x) Du -
c^{\theta_1,\theta_2}(x) u - f^{\theta_1,\theta_2}(x)\right\}=0
\quad\text{in}\quad \Omega,
\end{align*}
where $\Theta_1,\Theta_2\subset\R^m$ are compact metric spaces, 
$c^{\theta_1,\theta_2}\geq\lambda>0$, the matrices
$a^{\theta_1,\theta_2}\geq0$, and the
coefficients are Lipschitz continuous uniformly in $\theta_1,\theta_2$.
In Sections \ref{sec:ex} and \ref{sec:ext}, we give all the details,
more examples, and extensions to problems on unbounded domains,
time-dependent problems, and certain quasilinear equations like e.g.
\begin{align*}
-\tr\Big[\Big(I-\frac{Du\otimes
    Du}{1+|Du|^2}\Big)D^2u\Big]+\lambda u=f(x)
    \quad\text{in}\quad \Omega.
\end{align*}

Since these equations may be degenerate and non-linear, their solutions
will in general not be smooth. In this paper we work with a concept of
weak solutions called viscosity solutions, a precise definition is
given at the end of this introduction. Viscosity solutions are at
least continuous in the interior of $\Omega$. The boundary conditions will be
interpreted in the weak viscosity sense which essentially means that
either the boundary condition or the equation has to hold on the
boundary. This allow us to have well-posed problems even when the
boundary conditions are classically incompatible. The solutions can
realized by the vanishing viscosity method, and they will be
discontinuous at parts of the boundary where the boundary conditions
are classically incompatible.

An overview of the viscosity solution theory, including Neumann
boundary value problems, can be found in the User's Guide \cite{cil}.
The viscosity solution theory for Neumann type boundary value problems
was initiated by Lions \cite{Li:Neumann} in 1985 for first order
equations, and has been developed by many authors since, see
\cite{pls,I4,ba2,ba3,issa,bl} and references therein for various
aspects of this theory. Today there are two leading  
approaches, one due to Ishii \cite{I4} and another one due to Barles
\cite{ba2,ba3}. They apply under slightly different assumptions and
will be discussed below.

Starting with the standard case, i.e. nonlinear equations
 \eqref{EE} satisfying (H2), we prove under natural and standard
 assumptions, that these boundary value problems have unique H\"older
 continuous viscosity solutions. The H{\"o}lder regularity results are
 new and extend the Lipschitz regularity result of Barles \cite{ba2},
 and we give for the first time a complete proof of such a regularity
 result. We note that these regularity results are global up to the boundary.
 Local up to the boundary H{\"o}lder estimates have previously been
 obtained by Barles-Da Lio \cite{bl} for a different class of
 equations. Whereas our equations are degenererate but strictly
 increasing in the $u$ argument (assumption (H3) in the next section),
 their equations are weakly non-degenerate satisfying some sort of
 ``strong ellipticity condition" but are not necessarily increasing in
 $u$. The arguments needed to prove the two types of results are also
 different, except for some ideas on the construction of test
 functions that are needed in some of the proofs.

Next we prove continuous dependence results comparing  H\"older
continuous (sub and super) solutions of different boundary value problems.
The results we obtain include both continuous dependence on
non-linearities for the equation and the boundary condition. The
results concerning the dependence on the boundary condition are
completely new, at least in a viscosity solution setting, while the results
we obtain for the equations apply to much more general boundary
conditions (including non-linear ones) than earlier results. 

Continuous dependence results for the type of equations
 we consider in
this paper have previously been obtained by e.g. Cockburn
et.al. \cite{CockGripenLonden}, Jakobsen and Karlsen
\cite{JKContDep,JK:Ell,JK:CDIPDE}, and Gripenberg \cite{Gr:CDBC}. 
In all these
papers viscosity solutions methods are used. In some
cases such results can also be obtained from probabilistic arguments,
see e.g. \cite{fs} for results for Bellman equations
set in $\R^N$. Papers \cite{JKContDep,JK:Ell,JK:CDIPDE} treat very
general classes of equations set in $\R^N$ or $\R^N\times[0,T)$,
  \cite{CockGripenLonden} treats zero-Neumann boundary value problem
  for $x$-independent equations, and \cite{Gr:CDBC} treats a
  zero-Dirichlet boundary value problem. 

In the two last papers the domain $\Omega$ is convex and possibly
unbounded and in the last paper further restrictions on the class of
equations are needed (because of the Dirichlet condition) and the
Dirichlet condition is taken in the classical sense. All
these papers treat more general quasilinear equations than we can
treat here, e.g. $p$-Laplace type equations for $p>2$.

The technical explanation for the differences between our continuous
dependence result and the above mentioned results lays in the choice of
test function we use. To 
handle weakly posed Neumann boundary conditions, the idea is to use a
test function that will never satisfy the boundary condition. The
effect will be that the equation holds also at the 
boundary, and that the classical viscosity solution comparison
argument can be used (see the following sections). To achieve this the
usual test function has to be modified and the extent of the
modifications depend on how smooth and non-linear the Neumann 
condition is. To handle possibly non-linear boundary conditions
or H\"older continuous solutions in combination with
boundary reflection directions that are only Lipschitz functions in the
space variable, it seemed that the only available or at least the most
optimal test function to use, is the one constructed by Barles in
\cite{ba2,ba3}.   
As opposed to the basic test function used in the other papers on continuous
dependence, the test function of Barles is not symmetric in its
arguments ($x$ and $y$) and therefore it does not have equal $x$ and
$y$ gradients. We loose a cancellation property in the comparison proof
and hence can not handle as general gradient dependence in the
equations as with the basic test function.
In this paper we consider the same class of non-singular(!) 
equations as Barles in \cite{ba2,ba3}, and this excludes most of the
quasilinear equations considered in
\cite{CockGripenLonden,JKContDep,JK:Ell,Gr:CDBC}, including
$p$-Laplace equations for $p\neq2$ (see also remark
(\ref{noplalpacian}) in section 5.).   

At this point we mentioned that a different test function has
been constructed by Ishii in \cite{I4}. Compared with Barles, Ishii
is able to treat less regular domains but with more regular (and less
non-linear) boundary conditions (e.g. $C^1$ domains and $W^{2,\infty}$
reflections), see \cite{ba2} for a more detailed comparison. 
Using Ishii's test function, continuous dependence results could probably be
obtained under a different set of assumptions (see above). We have not
considered this case.

We also point out that we can handle $u$-depending boundary conditions
only through additional arguments involving transformations. This is in
contrast to the general {\it comparison} results obtained by Barles
\cite{ba3} under similar assumptions for $F$ and $G$. In \cite{ba3}
$u$-dependence is handled  directly by a sort of localization argument
(Lemma 5.2 in \cite{ba3}) 

which only works when you send some parameter 
of the test function to zero. In our continuous dependence arguments, 
we will have to optimize with respect to this parameter and the
optimal choice will in general not be zero or even small. See the
treatment of parameter $\eps$ at end of the proof of Theorem
\ref{main}.
One way to handle $u$-depending Neumann type boundary value problems,
is to transform them into problems with no $u$-dependence, then to use
our results, and finally to transform back.  

We do
not consider such transformations in this paper, instead we refer to
\cite{bl} where such transformations have been considered in a rather
general setting. 

Continuous dependence results have to do with well-posedness of the
equation. Typically the boundary value problem we consider model some
physical process, and the data is measured data. A continuous
dependence result then implies that small measurement errors only
produce small errors in the solutions. Any reasonable model should
satisfy such a requirement in particular in view of numerical computations.
Moreover, continuous dependence results have been used in many other
contexts. They play a key part in the shaking of coefficients approach
of Krylov to obtain error estimates for approximation schemes for 
Bellman equations \cite{Kr:HJB1,Kr:HJB2,Kr:LipCoeff,BJ:Err1,BJ:Err2,BJ:Err3},
in Bourgoing \cite{Bo:C1a} and in \cite{JKContDep} they are used to
obtain regularity results, and they have been used to estimate
diffusion matrix projection errors 
\cite{BOZ}, source term splitting errors \cite{JKR}, and errors
coming from the truncation of Levy measures \cite{JK:CDIPDE}. They
have also been used to derive the rate of convergence for the
vanishing viscosity method
\cite{CockGripenLonden,JKContDep,JK:Ell,Gr:CDBC}, see also 
e.g. \cite{CK06}.

The paper is organized as follows: In the next section we state the
assumptions on the boundary value problem \eqref{EE} and \eqref{BV}
in 
 
the standard case and give
well-posedness and H\"older regularity results. We state the main 
result, the continuous dependence result, and as an immediate
corollary we derive an estimate on the rate of convergence for the
vanishing viscosity method. The proofs of the main result along with
the regularity result are proven in Section \ref{sec:pfs}, and in
Section \ref{sec:ex} we apply our main result to obtain new
continuous dependence results for boundary value problems involving
Bellman-Isaacs equations. We give several extensions of our results in
Section \ref{sec:ext}, to time-depending equations, equations set on
unbounded domains, and certain quasilinear equations. Finally, in
the Appendix we derive the test function used in the
proofs in Section \ref{sec:pfs} along with its properties.

\subsection*{Notations}
We let $|\cdot|$ denote the Euclidean norm both in $\R^m$ (vectors) and
$\R^{m\times p}$ (matrices) for $m,p\in\N$. We denote by
$\mathbb{S}^N$ the space of symmetric $N\times N$ matrices, $\tr$ and
$^T$ denote trace and transpose of matrices, and $\leq$ denote the
natural orderings of both numbers and square matrices. 
For $a,b\in\R$ we define $a\vee b=\max(a,b)$ and $a\wedge
b=\min(a,b)$. We will also denote various constants by $K$ or $C$, and
their values may change from line to line.

Let $BUSC(U)$, $BLSC(U)$, $C(U)$, and $W^{p,\infty}(U)$ denote the spaces of 
bounded upper and lower semicontinuous functions, continuous
functions, and functions with $p$ essentially bounded derivatives, all
functions defined on $U$.
If $f:\mathbb{R}^N\rightarrow \mathbb{R}^{m\times p}$ is a function
and $\alp \in(0,1]$,  
then define the following (semi) norms :
\begin{align*}
   |f|_0=\sup_{x\in \bar{\Omega}}|f(x)|, \qquad
   [f]_{\alp}=\underset{x \neq y}{\sup_{ x,y \in \bar{\Omega}}}
   \frac{|f(x)-f(y)|}{|x-y|^{\alp}},
\qquad \text{and} \qquad
   |f|_{\alp}=|f|_0 + [f]_{\alp}.
\end{align*}
By $C^{0,\alp}(\bar{\Omega})$ we denote the set of functions $f:\to
\mathbb{R}$ with finite norm $|f|_{\alp}$.

We end this section by recalling the definition of a viscosity
solution: 
\begin{definition}
An upper semicontinuous function $u$ is a {\it viscosity subsolution}
of (\ref{EE}) and (\ref{BV}) if for all $\phi\in C^2(\bar{\Omega})$,
at each maximum point $x_0\in\bar{\Omega}$ of $u-\phi$, 
\begin{align}
F(x_0,u(x_0),D\phi(x_0),D^2\phi(x_0))&\le 0\quad\text{if }
x_0\in \Omega,\\ 
\min (
F(x_0,u(x_0),D\phi(x_0),D^2\phi(x_0)),G(x_0,Du(x_0))&\le 0\quad\text{if } x_0\in \partial\Omega
\end{align}
An lower-semicontinuous function $u$ is a {\it viscosity supersolution} of
(\ref{EE}) and (\ref{BV}) if for all $\phi\in C^2(\bar{\Omega})$, at
each minimum point $x_0\in\bar{\Omega}$ of $u-\phi$, 
\begin{align}
F(x_0,u(x_0),D\phi(x_0),D^2\phi(x_0))&\ge 0\quad\text{if }
x_0\in \Omega,\\ 
\max (
F(x_0,u(x_0),D\phi(x_0),D^2\phi(x_0)),G(x_0,Du(x_0))&\ge 0\quad\text{if } x_0\in \partial\Omega
\end{align}
Finally $u$ is a solution when it is both a super and a sub-solution. 
\end{definition}

\section{The main results}
\label{sec:results}

In this section we consider the standard case (when assumption (H2)
below holds).
Following \cite{ba2,ba3} we state the
assumptions on the boundary value problem \eqref{EE} and 
\eqref{BV} and give results on comparison, uniqueness, and existence 
of solutions. Then we give new H\"older regularity results extending the
Lipschitz regularity result of \cite{ba2} in two ways: we allow
H\"older continuous data and small $\lambda$ (see assumption (H3) below). 
We also give a complete proof. The main result of this
paper, the continuous dependence result, is then stated, and as an immediate
consequence we derive an explicit rate for the convergence of the
vanishing viscosity method.

Here is a list of the assumptions we will use, starting by the domain: 
\medskip

\noindent ({\bf H0}) $\ \Omega$ is a bounded
    domain in $\R^N$ with a $W^{3,\infty}$ boundary.
\medskip

\noindent For the equation we use the following standard assumptions: 
\medskip

\noindent({\bf H1}) $\ F \in C(\bar\Omega\times \mathbb{R} \times
       \mathbb{R}^N \times \mathbb{S}^N).$
\medskip

 \noindent ({\bf H2}) \
\begin{minipage}[t]{11cm}
There exists a modulus $\omega_{R,K}$ (a
   continuous, 
   non-decreasing function satisfying $\omega_{R,K}(0)=0$) such that
$$F(y,r,q,Y)-F(x,r,p,X)\leq \omega_{R,K}\Big(|x-y|+\frac
1{\eps^2}|x-y|^2+\eta^2+\eps^2+B\Big),$$
for $\eps,\eta\in(0,1]$, $B\geq0$, $x,y \in\bar\Omega$, $r\in\R$,
$|r|\leq R$, $p,q\in \R^N$ and $X,Y \in \mathbb{S}^N$ satisfying
$|x-y|\le K \eta\e$, $|p-q| \le K (\eta^2+\eps^2+B)$, $|p|+|q|\leq
K(\frac{\eta}{\eps}+\eta^2+\eps^2+B)$,  and
\begin{align}
\label{XY-ineq}
\begin{pmatrix}
              X & 0  \\  
              0 & -Y  
\end{pmatrix}
\le
\ \frac{K}{\e^2}
\begin{pmatrix}
              Id& -Id  \\  
              -Id & Id  
\end{pmatrix}
+K (\eta^2+\eps^2+B)
\begin{pmatrix}
              Id& 0  \\  
              0 & Id  
\end{pmatrix}.
\end{align}
\end{minipage}
\medskip

\noindent ($\overline{\mathrm{\bf H2}}$) \ \begin{minipage}[t]{11cm} There exists
$\alp\in(0,1]$ and $K_R\geq 0$ such that 
$$F(y,r,q,Y)-F(x,r,p,X)\leq
K_R\Big(|x-y|^{\alp}+\frac 1{\eps^2}|x-y|^2+\eta^2+\eps^2+B\Big),$$
where $\eps,\eta,B,R,x,y,p,q,X,Y$ are as in (H2).
\end{minipage}
\medskip

\noindent ({\bf H3}) \ For every $x,p,X,$ and for any $R>0$, there is 
       $\lambda_R > 0$ such that
       $$F(x,r,p,X)- F(x,s,p,X)\geq
       \lambda_{R}(r-s)  \quad\mbox{ for }\quad -R\leq s \leq r \leq R.$$
\smallskip
\noindent The possibly fully nonlinear Neumann type boundary condition
satisfies:
\medskip
   
\noindent ({\bf HB1}) \
\begin{minipage}[t]{11cm} There exists $\nu >0$ such that for all $\mu >0,
x\in \partial\Omega, p\in \R^N,$ 
$$G(x,p+\mu n(x))-G(x,p) \ge \nu \mu,$$
where $n(x)$ is the unit outward normal at $x$.
\end{minipage}
\medskip

\noindent ({\bf HB2}) \ There exists a constant $K$
such that for all $x,y\in\partial \Omega$ and all $p,q\in \R^N,$
$$|G(x,p)-G(y,q)| \le K\left [(1 +|p|+|q|) |x-y| +|p-q|\ \right].$$

\begin{remark}
In general there is a trade off between the regularity of the boundary
$\del\Omega$ and the generality and smoothness of the boundary
condition $G$, see \cite{ba2} for a discussion. (H0) compensates for very 
general non-smooth boundary conditions. 
\end{remark}

\begin{remark}
Assumption (H2) plays the same role as (3.14) in the Users' Guide
\cite{cil}. By this assumption the equation is degenerate
elliptic. Moreover, it is a refined version of assumption (H5-1) in
\cite{ba3} containing also a new parameter $B$. In the proofs, this
parameter will be used to carry information from the boundary
conditions (which are never satisfied, see the introduction) over to
the equations. Assumption ($\ol{\mathrm{H2}}$) is a strengthening of
hypothesis (H2) which yields H\"{o}lder regularity results.

By (H3) the equation is strictly increasing in the $u$ 
argument. Assumption (HB1) is the Neumann assumption, saying that the 
boundary condition $G$, contains non-vanishing and non-tangential (to
$\del\Omega$) oblique derivatives and it is a natural condition to insure
 the well-posedness of the problem.
\end{remark}

We now state a comparison, uniqueness, existence, and regularity
result for solutions of \eqref{EE} and \eqref{BV}. 

\begin{theorem}
\label{WP}
If (H0), (H1), (H2), (H3), (HB1), and (HB2) hold, then the
  following statements are true:
\smallskip

\noindent (a) 
If $u$ is a $BUSC(\bar\Omega)$ subsolution and $v$ is a $BLSC(\bar\Omega)$
  supersolution of \eqref{EE} and \eqref{BV}, then $u\leq v$ in $\bar\Omega$.

\smallskip
\noindent (b) If $\lambda_R$ in (H3) is independent of $R$, then there exists a
  unique solution $u\in C(\bar\Omega)$ of \eqref{EE} and \eqref{BV}.
\smallskip

\noindent (c) Assume ($\ol{H2}$) also holds, $u\in
  C(\bar\Omega)$ is the solution of \eqref{EE} and \eqref{BV}, and 
$\lambda:=\lambda_{|u|_0}>0$. Then there are constants
  $\beta\in(0,\alp]$ and $K$ (only depending on the data and $\lambda$)
  such that
$$|u(x)-u(y)|\leq K|x-y|^{\beta} \quad\text{in}\quad
  \bar\Omega\times\bar\Omega.$$
Furthermore, there exists a constant $\bar\lambda>0$ (only depending
  on the data) such that if $\lambda>\bar\lambda$ then $\beta =\alp$
  (the maximal regularity is attained).
\end{theorem}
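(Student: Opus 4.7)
My approach splits into the three parts. Parts (a) and (b) follow fairly standard viscosity patterns once the right test function is in hand, while (c) is the technical heart.

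For (a) I would run doubling of variables with the Barles-type test function $\varphi(x,y)$ constructed in the Appendix. The defining feature of $\varphi$ is that, in addition to the penalization $|x-y|^2/\eps^2$, it contains a boundary-correction term built from the signed distance $d$ to $\del\Omega$ and the reflection field $n$, designed so that at any maximum $(\bar x,\bar y)$ of $u(x)-v(y)-\varphi(x,y)$ the Neumann inequalities $G(\bar x,D_x\varphi)\le 0$ and $G(\bar y,-D_y\varphi)\ge 0$ are both \emph{violated} — even when $\bar x$ or $\bar y$ lie on $\del\Omega$. The viscosity definition then forces the $F$-inequalities to hold at both points. Applying the Crandall-Ishii lemma I get $X,Y\in\mathbb S^N$ satisfying \eqref{XY-ineq} with the parameter $B$ tracking the boundary correction, use (H2) to bound $F(\bar y,\cdot,D_y\varphi,Y)-F(\bar x,\cdot,-D_x\varphi,X)$ by a modulus of $\eps$, $\eta$, $B$, and use (H3) to absorb $\lambda_R(u(\bar x)-v(\bar y))$. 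Sending $\eta$, $\eps$ to $0$ in the right order yields $\sup_{\bar\Omega}(u-v)\le 0$. Part (b) then follows routinely: uniqueness from (a), and existence by Perron's method once we produce global sub/supersolutions — for this we take constants $\pm C$ adjusted by a corrector of the form $\pm Kd(x)$, using the $R$-independence of $\lambda_R$ to control the interior equation and (HB1) to satisfy the Neumann condition.

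For (c) I would run doubling of variables for $u$ against itself. Fix $\beta\in(0,\alp]$ and a large constant $L$, and look at
\begin{align*}
\Psi(x,y)= u(x)-u(y)-L|x-y|^\beta - \varphi_B(x,y),
\end{align*}
where $\varphi_B$ is again the Barles test function (with small parameter $\eps$) forcing the $G$-inequality to fail at the maximum $(\bar x,\bar y)$. After applying Crandall-Ishii at $(\bar x,\bar y)$ and using the strengthened assumption $(\overline{\mathrm{H2}})$ together with (H3), the two $F$-inequalities combine to give, schematically,
\begin{align*}
\lambda\, L|\bar x-\bar y|^\beta \;\le\; K_R\Bigl(|\bar x-\bar y|^\alp + \tfrac{1}{\eps^2}|\bar x-\bar y|^2 + \eta^2+\eps^2+B\Bigr) + \text{(Barles error)}.
\end{align*}
Choosing $\eps$, $\eta$, and $B$ as appropriate powers of $|\bar x-\bar y|$ so that each error is $o(|\bar x-\bar y|^\beta)$, and taking $L$ large enough depending on $\lambda$, forces $\bar x=\bar y$ and thus $u(x)-u(y)\le L|x-y|^\beta$ for all $x,y$. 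Symmetry gives the Hölder bound. In the regime $\lambda>\bar\lambda$ where $\lambda$ dominates the constants coming from $K_R$ and from the Barles correction, the same balancing works with $\beta=\alp$, giving maximal regularity.

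The main obstacle is (c). Two technical points require care. First, the Barles test function is not symmetric in $(x,y)$, so the cancellation of $x$- and $y$-gradients that one usually enjoys with a pure $|x-y|^2/\eps^2$ penalization is lost; the gradient bounds appearing in (H2) must be verified with the $\varphi_B$ contribution included, which is precisely why (H2) carries the auxiliary parameter $B$. Second, $L|x-y|^\beta$ is not $C^2$ on the diagonal: one separates the trivial case $\bar x=\bar y$ from the case $\bar x\neq\bar y$, where the power is smooth and Crandall-Ishii applies directly, or alternatively regularizes the power and passes to the limit. The delicate book-keeping is the choice of $\eps=|\bar x-\bar y|^{\beta/2}$ (and corresponding $\eta$, $B$) needed to make every error term strictly of order higher than $\beta$ while still satisfying the gradient-size hypotheses in (H2).
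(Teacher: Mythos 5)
Your treatment of (a) and (b) matches the paper, which itself defers the comparison principle to Barles \cite{ba3} and gets existence from Perron's method with the barriers $\pm(M-Kd(x))$. The problem is in (c), where your Ishii--Lions-type penalization $L|x-y|^\beta$ contains a circularity that the hypotheses (H2)/($\ol{\mathrm{H2}}$) do not let you escape. The constant $K_R$ in ($\ol{\mathrm{H2}}$) is really $K_R(K)$, where $K$ is the constant bounding $|p|+|q|$, $|p-q|$ and the matrix inequality \eqref{XY-ineq} for the derivatives of the test function. Adding $L|x-y|^\beta$ contributes $L\beta|\bx-\by|^{\beta-1}$ to the gradients and a Hessian of size $L|\bx-\by|^{\beta-2}$; with the only admissible choice of $\eps$ (forced by $|\bx-\by|\le K\eta\eps$, i.e.\ $\eps\gtrsim|\bx-\by|^{1-\alp/2}$, not $\eps=|\bx-\by|^{\beta/2}$ as you write), these are of \emph{exactly} the same order as $\eta/\eps$ and $K/\eps^2$, so you must take $K\sim L$ and hence $K_R(K(L))$ on the right-hand side grows with $L$. ``Take $L$ large depending on $\lambda$'' therefore does not produce a contradiction, and indeed cannot: if it did, it would give $\beta=\alp$ for every $\lambda>0$, contradicting the statement you are proving. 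Relatedly, your scheme gives no mechanism for the exponent to degrade to some $\beta<\alp$ when $\lambda$ is small.

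The paper avoids the extra penalization altogether: it works directly with $M=\sup\big(u(x)-u(y)-\phi_a(x,y)\big)$, where $\phi_a$ is the Barles function modified by an exponential factor $e^{-K_e(d(x)+d(y))}$ (needed to cancel the $\frac1{\eps^2}|\bx-\by|^2$ contributions at the boundary, since $B$ cannot be chosen in terms of $\eta^2$, which itself depends on $\bx,\by$). One first shows $\lambda M\le K\eps^{1\wedge\frac{2\alp}{2-\alp}}$ for $\lambda$ large, then optimizes $\eps$ against $|x-y|$ \emph{a posteriori} for each fixed pair, obtaining only the exponent $\tfrac23\wedge\alp$ on the first pass; a bootstrap in the regularization parameter $a$ (from $a=\eps$ to $a=\eps^2$, using the already-proved $\tfrac23$-H\"older bound to control $\frac{\eps}{a}\eta^3$ in \eqref{scnd}) is required to reach $\beta=\alp$ when $\alp>\tfrac23$. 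Finally, the small-$\lambda$ case is handled by an entirely separate iteration of Lions, solving $F(x,u^{n+1},Du^{n+1},D^2u^{n+1})+\mu u^{n+1}=\mu u^n$ with $\mu$ large and tracking the H\"older seminorms through the contraction $\frac{\mu}{\mu+\lambda}$. Your proposal contains none of these three ingredients (the exponential weight, the bootstrap in $a$, the Lions iteration), and the step it replaces them with does not close.
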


The comparison principle in (a) correspond to Theorem 2.1 in
\cite{ba3}. The uniqueness part in (b) follow from (a), and existence
follows from Perrons method \cite{Is:PM} since $w(x):=M-Kd(x)$ is a
supersolution of \eqref{EE}  and $-w$ is a subsolution of \eqref{BV}, 
if $M,K\geq 0$ are big enough,  and $d$ is the $W^{3,\infty}$ extension
of the distance function defined in the Appendix, see
Section 4 in \cite{ba3} for similar results. The regularity result,
part (c), will be proved in Section \ref{sec:pfs}. 
\begin{remark}
The regularity results in part c) are global up to the boundary. Local
 up to the boundary H{\"o}lder estimates  have  been
 obtained by Barles-Da Lio \cite{bl} using different techniques and
 assumptions on the nonlinearity of the equation. See the introduction for a discussion. 
\end{remark}
Now we proceed to the continuous dependence result. We will derive an
upper bound on the difference between a viscosity subsolution $u_1$ of 
\begin{eqnarray}
   \label{E}
   F_1(x,u_1(x),Du_1(x),D^2u_1(x)) &=& 0\quad  \text{ in }\quad
  \Omega, \\
  G_1(x, Du_1(x))&=&0 \quad \text{ on  }\quad  \partial \Omega,\nonumber
\end{eqnarray}
 and a viscosity supersolution $u_2$ of  
\begin{eqnarray}
   \label{E_tmp}
   F_2(x,u_2(x),Du_2(x),D^2u_2(x))& =& 0
  \quad \text{ in }  \quad\Omega,\\
    G_2(x,Du_2(x)) &=& 0\quad \text{ on } \quad \partial
  \Omega.\nonumber 
\end{eqnarray}
We assume the following estimates on the differences of
the two equations and of the two boundary conditions.
\medskip

\noindent {\bf (D1)} There are $\delta_1$, $\delta_2\geq0$,
and $K_F(K)\geq 0$ such that for any $K\geq0$,  
\begin{align*}
  & F_2(y,r,q,Y)  -  F_1(x,r,p,X)
\leq  K_F(K) \Big(\eta ^2+
\delta_1+  \frac{1}{\e^2}\delta_2^2
 +B\Big),
\end{align*}
for $0<\eps\leq\eta:=\eps^{\frac{\bar\alp}{2-\bar\alp}}\leq 1$ with
$\bar\alp=\alp\wedge\beta$, $B\geq0$, 
$x,y \in\bar\Omega$, $r\in\R$, $|r|\leq K$, $p,q\in \R^N$ and
$X,Y \in \mathbb{S}^N$ satisfying $|x-y|\le K \eta\e$, $|p-q| \le K
\eta^2 + K B$, $|p|+|q|\leq
K(\frac{\eta}{\eps}+\eta^2+B)$, and
\begin{align*}
&\begin{pmatrix}
              X & 0  \\  
              0 & -Y  
\end{pmatrix}
\le
\ \frac{K}{\e^2}
\begin{pmatrix}
              Id& -Id  \\  
              -Id & Id  
\end{pmatrix}
+K (\eta^2 +B)
\begin{pmatrix}
              Id& 0  \\  
              0 & Id  
\end{pmatrix}.
\end{align*}

\noindent {\bf (D2)}  There are $\mu_1,\mu_2,K_G\geq 0$ such that for
all $x\in\del\Omega$ and $p\in\R^N$,  
\begin{equation*}
G_2(x,p)- G_1(x,p)\le K_G ( \mu_1 + \mu_2 |p|).
\end{equation*}

\begin{remark}
\label{rem1}
Assumption (D1) is a ``continuous dependence'' version of (H2)
and ($\ol{\mathrm{H2}}$) in this paper, and assumption (3.14) in the
Users' Guide \cite{cil}. A similar assumption is used in Theorem 2.1 in
\cite{JK:Ell}. 

By $\beta$ and $\alp$ we denote the H\"older
exponents of the solutions and data respectively. In general $\alp\geq
\beta$, and equality only holds when $\lambda$ in (H3) is big enough. 

Since $|x-y|\le K\eps\eta$, $\eta=\eps^{\frac{\beta}{2-\beta}}$
imply $\frac{|x-y|^2}{\eps^2}\leq K\eta^2$ and
$|x-y|^{\beta}\leq K\eta^2$, the $F_1-F_2$ inequality in
(D1) will be implied by the following more standard inequality
\begin{align*}
& F_2(y,r,q,Y)  -  F_1(x,r,p,X)
\leq  K\Big(|x-y|^{\alp}+\frac{1}{\e^2} |x-y|^2 +
\delta_1+  \frac{1}{\e^2}\delta_2^2
 +\eta^2+B\Big).
\end{align*}
\end{remark}

\begin{remark} on assumption (D2). In the case of oblique derivative boundary
  conditions, $G_i(x,p)=\gamma_i(x)\cdot p-g_i(x)$, $i=1,2$, and 
$$|G_2(x,p)- G_1(x,p)|\le |(g_1- g_2)^+|_0 + |\gamma_1-\gamma_2| _0
|p|.$$
\end{remark}

Our main result is stated in the following theorem:
\begin{theorem}[Continuous Dependence Estimate] 
\label{main}
Assume (H0), (H1), (H3), (HB1), and (HB2) hold for $H_1,H_2,G_1,G_2$,
  and $u_1,u_2 \in  C^{0,\beta}(\bar{\Omega})$ for $\beta\in(0,1]$. Define
  $\nu^2=(\nu_1\vee\nu_2)(\nu_1\wedge\nu_2)$ and
  $\lambda=\lambda_{1,|u_1|_0}\vee \lambda_{2,|u_2|_0}$.

If (D1) and (D2) hold and $u_1$ and $u_2$ satisfy the boundary value
  problems \eqref{E} and \eqref{E_tmp} respectively, then there exist
  a constant $C>0$ (depending only on $K_F$, $K$, $K_G$
, $|u_1|_\beta$, $|u_2|_\beta$, $\alp$, $\beta$)
Such that  
\begin{equation*}
\lambda \max_{\bar{\Omega}} (u_1-u_2) \ \leq\ C\Big(\delta_1 +
\delta_2^{\alpha\wedge\beta}+\frac{\mu_1}{\nu}+\Big(\frac{\mu_2}{\nu}\Big)^{\alp\wedge\beta}\Big).   
\end{equation*}
\end{theorem}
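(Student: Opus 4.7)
The argument follows the now-classical doubling-of-variables scheme for viscosity solutions, but with a test function chosen so that the weakly-imposed boundary conditions can be bypassed. Let $\phi_{\eps,\eta}(x,y)$ denote the asymmetric test function constructed in the Appendix (modeled on Barles \cite{ba2,ba3}); the key feature is that, at $x\in\partial\Omega$, $D_x\phi_{\eps,\eta}$ has an inward-normal component so large that (HB1) forces $G_1(x,D_x\phi_{\eps,\eta})>0$, and symmetrically $G_2(y,-D_y\phi_{\eps,\eta})<0$ at $y\in\partial\Omega$. Consequently, at a maximizer $(\bar x,\bar y)$ of $\Psi(x,y):=u_1(x)-u_2(y)-\phi_{\eps,\eta}(x,y)$ on $\bar\Omega\times\bar\Omega$ lying on $\partial\Omega$, only the equation alternative in the viscosity definition can hold. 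The Hölder regularity of $u_1,u_2$ (with exponent $\beta$), together with the structural bounds on $\phi_{\eps,\eta}$ from the Appendix, then yields $|\bar x-\bar y|\le K\eps\eta$, $\phi_{\eps,\eta}(\bar x,\bar y)\le C\eta^2$, and the gradient/Hessian bounds appearing as hypotheses of (D1).

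Setting $p=D_x\phi_{\eps,\eta}(\bar x,\bar y)$ and $q=-D_y\phi_{\eps,\eta}(\bar x,\bar y)$, the boundary-difference estimate (D2) combined with (HB1) forces, at a boundary maximizer, the auxiliary quantity
\begin{equation*}
B:=C\Big(\frac{\mu_1}{\nu}+\frac{\mu_2}{\nu}\Big(\frac{\eta}{\eps}+\eta^2\Big)\Big)
\end{equation*}
to be inserted into the estimates on $|p-q|$ and the matrix inequality; this is precisely why the parameter $B$ was built into (D1). After Ishii's lemma produces symmetric matrices $X,Y$ satisfying that matrix inequality, the subsolution inequality $F_1(\bar x,u_1(\bar x),p,X)\le 0$ and the supersolution inequality $F_2(\bar y,u_2(\bar y),q,Y)\ge 0$ are subtracted, and one splits
\begin{equation*}
\begin{split}
F_2(\bar y,u_2(\bar y),q,Y)-\ &F_1(\bar x,u_1(\bar x),p,X)\\
=\ &\bigl[F_2(\bar y,u_2(\bar y),q,Y)-F_2(\bar y,u_1(\bar x),q,Y)\bigr]\\
&+\bigl[F_2(\bar y,u_1(\bar x),q,Y)-F_1(\bar x,u_1(\bar x),p,X)\bigr].
\end{split}
\end{equation*}
Hypothesis (H3) bounds the first bracket by $-\lambda(u_1(\bar x)-u_2(\bar y))$, while (D1) bounds the second by $C(\eta^2+\delta_1+\delta_2^2/\eps^2+B)$.

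From $\Psi(\bar x,\bar y)\ge\Psi(x^\ast,x^\ast)$ at any $x^\ast$ realizing $M:=\max_{\bar\Omega}(u_1-u_2)$, and since $\phi_{\eps,\eta}(x,x)=O(\eta^2)$, one gets $u_1(\bar x)-u_2(\bar y)\ge M-C\eta^2$, and hence
\begin{equation*}
\lambda M\le C\Big(\delta_1+\frac{\delta_2^2}{\eps^2}+\eps^{\bar\alp}+\frac{\mu_1}{\nu}+\frac{\mu_2}{\nu}\Big(\frac{\eta}{\eps}+\eta^2\Big)\Big),
\end{equation*}
provided one has used the choice $\eta=\eps^{\bar\alp/(2-\bar\alp)}$ (with $\bar\alp=\alp\wedge\beta$) that balances the Hölder penalty $|u_i|_\beta|x-y|^\beta$ against $|x-y|^2/\eps^2$. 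A final separate optimization in $\eps$---of $\delta_2^2/\eps^2$ against $\eps^{\bar\alp}$, and of $(\mu_2/\nu)(\eta/\eps)$ against $\eps^{\bar\alp}$---produces respectively the terms $\delta_2^{\bar\alp}$ and $(\mu_2/\nu)^{\bar\alp}$ in the statement. The main technical obstacle is keeping every boundary-generated error within the parameter $B$ of (D1): because the Barles test function is not symmetric, one loses the cancellation $D_x\phi+D_y\phi\equiv 0$ that would be automatic for $|x-y|^2/\eps^2$, so the delicate bookkeeping of all gradient/Hessian contributions---done once and for all in the Appendix---is what makes the scheme close.
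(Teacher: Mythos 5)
Your proposal follows essentially the same route as the paper's proof: the Barles test function from the Appendix, the boundary lemma forcing the equation to hold at boundary maximizers, the Hölder/positivity estimate giving $|\bar x-\bar y|\le K\eps\eta$, Ishii's lemma, the (H3)/(D1) split, and a final optimization in $\eps$. The only structural difference is cosmetic: you apply (H3) to $F_2$ at $r=u_1(\bar x)$, whereas the paper applies it to $F_1$ at $r=u_2(\bar y)$ (and remarks that $\lambda_1$ may be replaced by $\lambda_1\vee\lambda_2$); both decompositions work.

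One concrete slip in the last step: the residual term coming from the test function is $\eta^2=\eps^{2\bar\alp/(2-\bar\alp)}$, not $\eps^{\bar\alp}$, and the optimization must balance $\delta_2^2/\eps^2$ and $(\mu_2/\nu)(\eta/\eps)$ against $\eta^2$. Balancing $\delta_2^2/\eps^2$ against $\eps^{\bar\alp}$ as you state yields only $\delta_2^{2\bar\alp/(2+\bar\alp)}$, which is weaker than the claimed $\delta_2^{\bar\alp}$; setting $\delta_2^2/\eps^2=\eta^2$, i.e.\ $\delta_2=\eps\eta=\eps^{2/(2-\bar\alp)}$, gives $\eta^2=\delta_2^{\bar\alp}$ as in the paper (and similarly $\mu_2/\nu=\eps\eta$ gives $(\mu_2/\nu)^{\bar\alp}$). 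Two further minor points: the paper's choice of $B$ also contains a term $K(\eta^2+\eps^2+a)$, which is needed in the boundary lemma to absorb the regularization error of $C_{2,a}$ and the $|d(x)-d(y)|/\eps^2$ terms, not only the $\mu_1,\mu_2$ contributions; and one must first reduce to $\delta_1,\delta_2,\mu_1/\nu,\mu_2/\nu\le 1$ (the complementary case being trivial from boundedness of $u_1,u_2$) so that the optimal $\eps$ stays in $(0,1]$.
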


\begin{remark}
As far as we know this is the first result giving continuous dependence on the
boundary condition. The result also extends the earlier continuous
dependence on the equation type of results of
\cite{CockGripenLonden,JKContDep,JK:Ell,Gr:CDBC} since
much more general boundary conditions are considered (but at the
expense of less general equations!). 
\end{remark}

We prove Theorem \ref{main} in Section \ref{sec:pfs}. An immediate
consequence of this result is an estimate on the rate of
convergence for the vanishing viscosity method.
For $\mu>0$ we consider the solution $u_\mu$ of   
\begin{align}
\label{VV}
F(x,u,Du,D^2u)&=\mu \Delta u \qquad\text{in}\quad \Omega,
\end{align}
with boundary condition \eqref{BV}. The result is the following:

\begin{theorem}
\label{VVthm}
Assume (H0), (H1), ($\ol{H2}$), (H3), (HB1), (HB2),
$\mu>0$, and that $u$ and $u_\mu$ solve
\eqref{EE}/\eqref{BV} and \eqref{VV}/\eqref{BV} respectively. Then $u$
and $u_\mu$ belong to $C^{0,\beta}(\bar\Omega)$ for some
$\beta\in(0,\alp]$ and  
$$|u-u_\mu|_0\leq C\mu^{\beta/2}.$$
\end{theorem}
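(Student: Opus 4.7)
The plan is to apply Theorem~\ref{main} with $F_1 = F$, $F_2(x,r,p,X) = F(x,r,p,X) - \mu\,\tr(X)$, and common boundary datum $G_1 = G_2 = G$. Since $G_1 = G_2$, (D2) holds with $\mu_1 = \mu_2 = 0$, so everything reduces to (i) obtaining uniform (in $\mu$) H\"older regularity of $u_\mu$, and (ii) verifying (D1) for the pair $(F_1, F_2)$ with $\delta_2$ of order $\sqrt{\mu}$.

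For (i), note that $F_2$ inherits (H1), (H3), ($\ol{\text{H2}}$), (HB1), (HB2) with constants independent of $\mu \in (0,1]$: applying the matrix inequality of ($\ol{\text{H2}}$) to the test vector $(v,v)$ yields $X - Y \leq 2K(\eta^2 + \eps^2 + B)\,Id$, so the extra contribution $\mu(\tr(X) - \tr(Y))$ inside $F_2(y,\cdot,Y) - F_2(x,\cdot,X)$ is absorbed into the right-hand side of ($\ol{\text{H2}}$) uniformly in $\mu \leq 1$. Theorem~\ref{WP}(c) applied to both $u$ and $u_\mu$ then yields a common H\"older exponent $\beta \in (0,\alp]$ and a bound on $|u_\mu|_\beta$ independent of $\mu$.

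For (ii), in the direction $u_1 = u$, $u_2 = u_\mu$ we compute
$$
F_2(y,r,q,Y) - F_1(x,r,p,X) = \bigl[F(y,r,q,Y) - F(x,r,p,X)\bigr] - \mu\,\tr(Y).
$$
The bracket is controlled by ($\ol{\text{H2}}$) combined with Remark~\ref{rem1}: under the admissibility conditions of (D1) with $\eta = \eps^{\bar\alp/(2-\bar\alp)}$, both $|x-y|^{\alp}$ and $|x-y|^2/\eps^2$ are dominated by a multiple of $\eta^2$, and $\eps^2 \leq \eta^2$. For the extra term, the lower-right block of the matrix inequality in (D1) gives $-Y \leq (K/\eps^2 + K(\eta^2 + B))\,Id$, whence
$$
-\mu\,\tr(Y) \leq N K\,\frac{\mu}{\eps^2} + N K \mu(\eta^2 + B),
$$
which fits the structure of (D1) with $\delta_1 = 0$, $\delta_2 = \sqrt{\mu}$, and a constant $K_F$ independent of $\mu$. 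The reverse direction ($u_1 = u_\mu$, $u_2 = u$) is analogous, using the upper-left block of the matrix inequality to bound $+\mu\,\tr(X)$. Theorem~\ref{main} then yields $\lambda\,\max_{\bar\Omega}|u - u_\mu| \leq C\,\delta_2^{\alp\wedge\beta} = C\,\mu^{\beta/2}$, since $\beta \leq \alp$.

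The only real difficulty here is bookkeeping: one must check that every constant appearing in the hypotheses of Theorem~\ref{main} (most notably $K_F$ and $|u_\mu|_\beta$) can be chosen uniformly in $\mu \in (0,1]$, and this in turn reduces to verifying that the Laplacian perturbation interacts harmlessly with the matrix inequalities in ($\ol{\text{H2}}$) and (D1); once that is in place, the advertised rate $C\mu^{\beta/2}$ falls out of Theorem~\ref{main} directly.
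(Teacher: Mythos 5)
Your proposal is correct and takes essentially the same route as the paper: one verifies (D1) for the pair $F$ and $F-\mu\,\tr(\cdot)$ with $\delta_1=0$ and $\delta_2=\sqrt{\mu}$, using ($\ol{\mathrm{H2}}$) together with the block matrix inequality to bound $-\mu\,\tr Y$ (and $+\mu\,\tr X$ in the reverse direction) by $K\mu/\eps^2$ plus admissible lower-order terms, and then applies Theorems \ref{WP}(c) and \ref{main}. You are in fact slightly more careful than the paper in checking that the H\"older constants for $u_\mu$ can be taken uniformly in $\mu\le 1$.
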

\begin{proof}
Regularity follows from Theorem \ref{WP}. By assumption ($\ol{\mathrm{H2}}$) 
$$[F(y,r,q,Y)-\mu \,\tr \,Y]-F(x,r,p,X)\leq
C(|x-y|^{\alp}+\frac{|x-y|}{\eps^2}+\eta^2+\eps^2)-\mu\, \tr\, Y,$$
and inequality \eqref{XY-ineq} implies that $-\tr Y\leq C\frac1{\eps^2}+\mathrm{small\ terms}$.
Theorem \ref{main} immediately gives $u-u_\mu\leq
C\mu^{\beta/2}$. A lower bound can be found in a similar way.
\end{proof}

\begin{remark}
This result seems to be the first such result for complicated boundary
condition. We refer to \cite{CockGripenLonden,Gr:CDBC} for results on
weak $0$-Neumann or classical $0$-Dirichlet problems, to \cite{PeSa}
for results on linear Neumann boundary value problems for first order
equations, and to \cite{JKContDep,JK:Ell} for result in $\R^N$ or
$(0,T)\times\R^N$.
\end{remark}

\begin{remark}
The vanishing viscosity method has been studied by many authors
dealing with weak solutions of nonlinear PDEs. The method has 
been used to obtain existence (and uniqueness!) of solutions for
degenerate (e.g. first order) problems by taking the limit as
$\mu\ra0$ (see e.g. \cite{BiBr,soug}), and it is well-known
that it is strongly related to the problem of proving convergence
rates for numerical approximations of such problems (see
e.g. \cite{crli,PeSa}).
\end{remark}

\section{Proofs of Theorems \ref{main} and \ref{WP} (c)}
\label{sec:pfs}
\begin{proof}[Proof of Theorem \ref{main}]
First we assume without loss of generality that
$$\delta_1,\delta_2,\frac{\mu_1}{\nu},\frac{\mu_2}{\nu}\le 1.$$
If this is not the case then the theorem holds since
$$u_1-u_2\leq
(|u_1|_0+|u_2|_0)\Big(\delta_1+\delta_2^{\bar\alp}+\frac{\mu_1}{\nu}+\Big(\frac{\mu_2}{\nu}\Big)^{\bar\alp}\Big),$$
where $\bar\alp=\alp\wedge\beta$.
Then we double the variables and consider
\begin{align*}
 \psi(x,y) &=
   u_1(x)-u_2(y)-\phi(x,y) \quad\text{and}\quad M=\max_{x,y\in\bar\Omega}
   \psi(x,y)=\psi(\bx,\by),
\end{align*}
where for $A,B\geq 0$,
\begin{align*}
\begin{aligned}
   \phi(x,y)&=\frac{1}{\e^2} |x-y|^2 +\frac{A}{\e^2}
   \left(d(x)-d(y)\right)^2 -B(d(x)+d(y))\\
&\quad-\tilde C_2(\frac{x+y}{2},
   \frac{2(x-y)}{\e^2}) (d(x)-d(y)),
\end{aligned}
\end{align*}
and $\tilde C_2(x,p)=C_{2,a}(x,p)$ with
$a=\eta\eps=\eps^{\frac{2}{2-\bar\alp}}$
($\eta=\eps^{\frac{\bar\alp}{2-\bar\alp}}$ by (D1)).
The functions $C_{2,a}$ and $d$ are defined in the Appendix,
and the smooth function $\phi$ was introduced by Barles in
\cite{ba3}. We refer to the Appendix for the proofs 
of the properties of $\phi$.

The existence of a point $(\bar x,\bar y)$ follows from compactness of
$\bar{\Omega}$ and the continuity of all functions involved. 
Since $(\bar x,\bar y)$ is a maximum point,
$$ 2\psi(\bx,\by) \ge \psi(\bx,\bx) +\psi(\by, \by).$$
Moreover, if $A$ is big enough, Lemma \ref{lem_pos} of the Appendix
implies that  
\begin{align}
\label{posA}
\phi(\bx,\by)\geq \frac{1}{2\eps^2}|\bx-\by|^2-K_0\eps^2-B(d(\bx)+d(\by)),
\end{align}
and H\"older regularity of $u_1$ and $u_2$ combined with the last two
inequalities yield 
$$\frac{1}{2\e^2} |\bar x-\bar y|^2\le K_1|\bar x-\bar
y|^{\bar\alp}\vee \eps^2
$$ 
for some constant $K_1$ depending on $K_0$ and the H\"older constants of $u_1$
and $u_2$ (but not on $B$). Equivalently, since
$\eta=\eps^{\frac{\bar\alp}{2-\bar\alp}}$ by (D1) and $\eps\leq\eta$, 
\begin{equation}
\label{bxbyestim}
 |\bx-\by|\le \tilde K_1 \varepsilon^{\frac{2}{2-\bar\alpha}}\,=\tilde
  K_1\eta\eps\quad 
  \hbox{and}\quad \frac{1}{\e^2} |\bx-\by|^2\le \tilde K_1
  \varepsilon^{\frac{2\bar\alpha}{2-\bar\alpha}}\,=\tilde K_1\eta^2.
\end{equation}

Now we choose $A$ and $B$ in the test function $\phi$ to
insure that when $\bx$ or $\by$ 
belong to the boundary $\del\Omega$, then the boundary conditions can
not hold there. See Lemma \ref{lem_BC} of the Appendix. This
means that the {\it equations} always has to hold at $\bx$ and
$\by$. The precise choices of $A$ and $B$ are
$$B = K(\eta^2+\eps^2) + \frac
K{\nu}\Big(\mu_1+\mu_2\frac{\eta}{\eps}\Big)
\quad  \text {and}\quad  A=K,$$ 
for some $K$ only depending on the data of the problem. 

By the maximum principle for semicontinuous functions, Theorem 3.2 of
the "Users' guide" \cite{cil}, there are $(p,X)\in \bar{J}^{2,+}_{\bar \Omega}
u_1(\bx)$ and $(q,Y)\in \bar{J}^{2,-}_{\bar \Omega}u_2(\by)$ such that
\begin{gather*}
p= D_x \phi (\bx,\by), \qquad  q =-D_y\phi(\bx,\by),\\
\begin{pmatrix}
              X & 0  \\  
              0 & -Y  
\end{pmatrix}
\le 
[Id+ \e^2 D^2 \phi (\bx,\by)] D^2 \phi (\bx,\by).
\end{gather*}
Using the definition of viscosity sub and super solutions at $\bx$ and
$\by$ (and Lemma \ref{lem_BC}) we get
\begin{equation*}
F_1(\bx, u_1(\bx),p,X)\le 0\le F_2(\by,u_2(\by),q,Y).
\end{equation*}
We rewrite this as
\begin{equation}
\label{solg1}
F_1(\bx, u_1(\bx),p,X)-F_1(\bx, u_2(\by),p,X)\le
F_2(\by,u_2(\by),q,Y))-F_1(\bx, u_2(\by),p,X). 
\end{equation}
By Lemma \ref{lem_deriv}, the definitions of $p,q,X,Y$, and
$\eps\leq\eta\leq 1$, it follows that
\begin{gather*}
|p-q|\leq K\eta^2+2B, \\
\begin{pmatrix}
              X & 0  \\  
              0 & -Y  
\end{pmatrix}
\le 
\ \frac{K}{\e^2}
\begin{pmatrix}
              Id& -Id  \\  
              -Id & Id  
\end{pmatrix}
+K( \eta^2 +B)
\begin{pmatrix}
              Id& 0  \\  
              0 & Id  
\end{pmatrix},
\end{gather*}
again for some $K$ only depending on the data of the problem.
Since we also have \eqref{bxbyestim}, we are in a position to use
assumption (D1). So if $u_1(\bx)-u_2(\by)\ge 0$, then (D1) and (H3)
applied to \eqref{solg1} yield
\begin{equation*}
\lambda_1 (u_1(\bx)-u_2(\by)) \le  K_F(K) \Big(\eta ^2+
\delta_1+  \frac{1}{\e^2}\delta_2^2
 +B\Big). 
\end{equation*}
By \eqref{posA} and the definition of $\psi$, it follows that
\begin{equation*}
\label{finalstep}
u_1(x)-u_2(x)\le  \psi_\e(x,x)\le 
\psi_\e(\bx,\by) \le u_1(\bx)-u_2(\by) + 2B (d(\bx)+d(\by)).
\end{equation*}
Therefore the two previous inequalities and the choice of $B$ implies that
$$\lambda_1 (u_1(x)-u_2(x)) \le  K \Big(\eta^2 +\delta_1 +
\frac{1}{\e^2}\delta_2^2 +\frac{[\mu_1+ \mu_2
  \frac{\eta}{\e}]}{\nu}      \Big).$$
Remember that $\eta=\eta(\eps)=\eps^{\frac{\bar\alp}{2-\bar\alp}}$ and let
$\eps_1$ and $\eps_2$ be defined by
\begin{align*}
\eta(\eps_1)^2&=\frac1{\eps_1^2}\delta_2^2 &&\text{or}\quad
\eta(\eps_1)^2=\delta_2^{\bar\alp},\\ 
\eta(\eps_2)^2&=\frac{\mu_2\frac{\eta(\eps_2)}{\eps_2}}{\nu} &&\text{or}\quad
\eta(\eps_2)^2=\frac{\mu_2^{\bar\alp}}{\nu^{\bar\alp}} .
\end{align*}
Now with $\eps=\eps_1\vee\eps_2$ ($\leq 1$ by assumption) it
follows that 
\begin{equation*}
\lambda_1 (u_1(x)-u_2(x)) \le  K \Big(\delta_1+  \delta_2^{\bar\alp}
+\frac{\mu_1}{\nu}+\frac{\mu_2^{\bar\alp}}{\nu^{\bar\alp}}\Big). 
\end{equation*}
A closer look at the proof reveals that we
may replace $\lambda_1$ by $\lambda_1\vee\lambda_2$.
\end{proof}

\begin{proof}[Proof of Theorem \ref{WP} (c)]
We start by proving $\alp$-H\"older regularity when $\lambda$ is big
(the last statement of Theorem \ref{WP} (c)). 
The proof is similar to the proof of Theorem \ref{main} except that we have
to modify the test function and use a bootstrap argument. The modified
test function is
\begin{align*}
\begin{aligned}
   \phi_a(x,y)&=\frac{1}{\e^2} e^{-K_e(d(x)+d(y))} |x-y|^2 +\frac{A}{\e^2}
   \left(d(x)-d(y)\right)^2 \\
&\quad-C_a\Big(\frac{x+y}{2},
   \frac{2e^{-K_e(d(x)+d(y))}(x-y)}{\e^2}\Big) (d(x)-d(y))\\
&\quad -K_B(a+\eps^{\frac{2\alpha}{2-\alpha}})(d(x)+d(y)).
\end{aligned}
\end{align*}
We refer to the Appendix for the definitions of $C_a$ and
 $d$.  Playing with the parameter
 $a$, we will use a bootstrap argument to prove that $u$ has the
 right regularity. 

The new test function satisfies similar estimates as the ones
 given in Lemmas \ref{lem_pos} - \ref{lem_deriv}. The moral is that the new
 terms coming from the exponential term are not worse  than the
 old terms. We refer to \cite{ba3} for such
 estimates given in the full generality (but with a different choice
 of $a$).

Now let $\eps\le 1$ and double the variables defining  
$$M:=\psi(\bar x,\bar y)=\sup\psi(x,y)\quad \text{where}\quad
\psi(x,y)=u(x)-u(y)-\phi_a(x,y).$$
If $A$ is big enough, (an easy extension of) Lemma \ref{lem_pos} and
the inequality 
$2\psi(\bx,\by)\geq \psi(\bx,\bx)+\psi(\by,\by)$ imply that
\begin{align}
\label{pospos}
\frac1{2\eps^2}e^{-K_e(d(x)+d(y))}|\bx-\by|^2\leq
2[u(\bx)-u(\by)]+K_0\eps^2
\leq 2|u|_0+K_0.
\end{align}
Define $\eta^2=K^{-1}\frac1{\eps^2}|\bx-\by|^2$ with
$K=e^{2K_e}(2|u|_0+K_0)$. By \eqref{pospos},
$$\eta^2\leq 1\quad \text{and}\quad |\bx-\by|\leq K^{1/2}\eta\eps.$$
We proceed as in the proof of Theorem \ref{main}.

By arguments similar to the ones in the proof of Lemma \ref{lem_BC}, 
if $A$, $K_e$ and $K_B$ are big enough (not depending on $\eps$, $a$
or $B$), then the equation holds even if $(\bar x,\bar y)$ lies on
$\del(\Omega\times\Omega)$. Compared with the proof of Theorem \ref{main}, the
exponential allows us to cancel at the boundary all terms of the form
$\frac{1}{\e^2}|\bx-\by|^2=K\eta^2$ and use
$B=K_B(a+\eps^{\frac{2\alpha}{2-\alpha}})$ at each step.

Note that $D\phi_a$ and $D^2\phi_a$ still satisfy inequalities
\eqref{pmqest} and \eqref{scnd} in Lemma \ref{lem_deriv}. We will choose $a$
such that inequality \eqref{scnd}
takes the form of \eqref{XY-ineq}, i.e. we choose $a$ such that $\frac
{\eps}a \eta^3\leq K$. Since $\eta^2\leq1$ we choose $a=\eps$. Again
we use the definition of  
viscosity solutions and subtract the equations (inequalities) at $\bar
x$ and $\bar y$ using the maximum principle for semi continuous
functions. By the appropriate version of Lemma 
\ref{lem_deriv} and the definition of $\eta^2$ and $B$ we can now use (H1)
and ($\ol{\mathrm{H2}}$) to get 
$$\lambda(u(\bar x)-u(\bar y))\leq 
K\Big(|\bar x-\bar y|^{\alp}+\frac 1{\eps^2}|\bar x-\bar y|^2
+\eta^2+\eps^2+K_B(\eps+\eps^{\frac{2\alpha}{2-\alpha}})\Big).$$  
By Young's inequality, the definition of $\eta^2$, and $\eps\leq1$ we have
$$\lambda(u(\bar x)-u(\bar y))\leq  K\Big(\frac 1{\eps^2} |\bar x-\bar 
y|^2 + \eps^{1\wedge\frac{2\alpha}{2-\alpha}}\Big).$$
When $A$ is big enough, an appropriate version of Lemma \ref{lem_pos},
the definition of $M$, and $0\leq d\leq 1$, imply that 
$$u(\bx)-u(\by)= M+\phi(\bx,\by)\geq M
+\frac1{2\eps^2}e^{-2K_e}|\bx-\by|^2-K_0\eps^2
-K_B(\eps+\eps^{\frac{2\alpha}{2-\alpha}})(d(\bx)+d(\by)).$$  
Combining the two last inequalities and using that $\eps\leq1$ leads to
$$\lambda M \leq
\big(K-\frac{\lambda}2e^{-2K_e}\big)\frac1{\eps^2}|\bar x-\bar
y|^{2}+K\eps^{1\wedge\frac{2\alpha}{2-\alpha}}$$ 
If $\lambda$ is big enough, $\lambda M\leq K
\eps^{1\wedge\frac{2\alpha}{2-\alpha}}$, 
and the definition of $M$ leads to
$$u(x)-u(y) - \phi_{\eps}(x,y)\leq M \leq
\frac{K}{\lambda}\eps^{1\wedge\frac{2\alpha}{2-\alpha}}$$
for every $x,y\in \bar\Omega$. Now by the definition of $\phi_a$, the
properties of the distance function, and Young's inequality, we have 
$$u(x)-u(y) \leq
K\frac{1}{\eps^2}|x-y|^2+K\eps^{1\wedge\frac{2\alpha}{2-\alpha}}.$$ 
If $|x-y|\leq 1$ we may take $\eps=|x-y|^{\frac23}$ when
$1<\frac{2\alp}{2-\alp}$ and
$\eps^{\frac{2\alp}{2-\alp}}=|x-y|^{\alp}$ otherwise, the result
(since we may also interchange $x$ and $y$) is that 
\begin{align}
\label{Hest}
|u(x)-u(y)|\leq K|x-y|^{\frac23\wedge\alp}.
\end{align}
If $|x-y|\geq 1$, the result still holds since then $|u(x)-u(y)|\leq
2|u|_0|x-y|^{\frac23\wedge\alp}$. We are now done if $\alp\leq \frac23$. 

If $\alp\in (\frac23,1]$, we restart the proof using the regularity estimate
\eqref{Hest} to get a better choice of $a$ such that $\frac{\eps}a\eta^3\leq
K$. From \eqref{Hest} and the first inequality in
\eqref{pospos}, 
$$\eta^2=K^{-1}\frac{1}{\eps^2}|\bx-\by|^2\leq K |\bx-\by|^{\frac23}\vee\eps^2 \qquad\text{and
  hence}\qquad \eta\leq K\eps^{\frac{\frac23}{2-\frac23}}\vee\eps,$$  
so the new choice of $a$ should be
$\eps\eta^3=\eps^{\frac52}\vee\eps^4$. But this quantity is
less than $\eps^2$ so we may instead take $a=\eps^2$ which still
implies $\frac{\eps}a\eta^3\leq K$. Now it is a simple exercise to
redo the proof and show that for $\lambda$ big,
\begin{align*}
|u(x)-u(y)|\leq K|x-y|^{\alp} \qquad \text{for}\qquad x,y\in\bar\Omega,
\end{align*}
and this completes the proof of the last part of Theorem \ref{WP}.

Now we will prove the first part of Theorem \ref{WP} (c) using the
result we proved above and an iterative argument of Lions
\cite{Li:Ex}. Here we only sketch parts of the
argument, since the details can be found in \cite{JK:Ell} for similar
equations. The idea is to consider for $\mu>0$
$$F(x,u^{n+1},Du^{n+1},D^2u^{n+1})+\mu u^{n+1} = \mu u^n$$
with boundary conditions \eqref{BV} and noting that $u^n$ converge
uniformly to $u$. If $\mu$ is big enough the above proven 
result applies, and a careful look at the above argument
reveals that when $\lambda+\mu>K(=\bar\lambda)$, then
\begin{align*}
|u^{n+1}(x)-u^{n+1}(y)|\leq \Big(\frac{\mu|u^n|_\alp}{\lambda+\mu - K}+
 \mathrm{other\ terms}\Big)|x-y|^{\alp}, \quad 
 x,y\in\bar\Omega,  |x-y|\leq 1. 
\end{align*}
Furthermore the comparison principle yields
$$|u^{n+1}-u|_0\leq \frac{\mu}{\mu+\lambda}|u^n-u|_0\leq
\Big(\frac{\mu}{\mu+\lambda}\Big)^n|u^0-u|_0.$$ 
When $|x-y|\leq 1$, the rest of the proof is exactly as in
\cite{JK:Ell} and we omit it. When $|x-y|>1$ any H\"older estimate
holds since $u$ is bounded. The result is a H\"older estimate for
any $\lambda>0$, but with a H\"older exponent that is smaller than $\alp$.
\end{proof}

\section{Bellman-Isaacs type boundary value problems}
\label{sec:ex}

In this section we apply our results in Section \ref{sec:results} to
Bellman-Isaacs equations and several different types of boundary conditions.
The Bellman-Isaacs equations are of the form
\begin{align}
\label{HJB}
\inf_{\theta_1\in\Theta_1}\sup_{\theta_2\in\Theta_2}\left\{-
\tr[(\sigma\sigma^T)^{\theta_1,\theta_2}(x) D^2u] -
b^{\theta_1,\theta_2}(x) Du - 
c^{\theta_1,\theta_2}(x) u - f^{\theta_1,\theta_2}(x)\right\}=0
\end{align} 
in $\Omega$. Assumptions (H1), (H2), ($\ol{\mathrm{H2}}$), and (H3)
are satisfied \cite{cil,JK:Ell} if we assume:
\begin{itemize}
\item[1.] $\sigma^{\theta_1,\theta_2}$ and $b^{\theta_1,\theta_2}$ are
Lipschitz continuous in $x$ uniformly in $\theta_1,\theta_2$,
\item[2.] $c^{\theta_1,\theta_2}$ and $f^{\theta_1,\theta_2}$ are $\alp$-H\"{o}lder continuous  in $x$ uniformly in
$\theta_1,\theta_2$,
\item[3.] $c^{\theta_1,\theta_2}(x)\geq\lambda>0$ for all
  $x,\theta_1,\theta_2$, and 
\item[4.] $\Theta_1,\Theta_2$ are  compact metric spaces.
\end{itemize}
Next, we list some typical boundary conditions we can consider:

(a) The classical Neumann condition: 
$$\frac{\del u}{\del n}=g(x) \quad \text{in}\quad \del\Omega.$$ 

(b) The oblique derivative condition:
$$\frac{\del u}{\del \gamma}=g(x)\quad  \text{in}\quad \del\Omega.$$

(c) The capillary boundary condition:
\begin{align}
\label{cap}
\frac{\del u}{\del n}=\bar\theta(x)(1+|Du|^2)^{1/2}\quad \text{in}\quad
    \del\Omega\quad \text{ with } |\bar\theta(x)| \le \omega<1.
\end{align}

(d) The ``controlled'' reflection boundary condition:
\begin{align}
\label{ctrlBC}
\inf_{\alp\in\Theta_1}\sup_{\beta\in\Theta_2}\{\gamma^{\theta_1,\theta_2}(x)
    \cdot Du
    -g^{\theta_1,\theta_2}(x)\}=0\quad \text{in}\quad \del\Omega.
\end{align}
Here $n(x)$ is the outward unit normal to $\del\Omega$. Assumptions (HB1)
and (HB2) hold in all cases if assumption 4 holds along with
\begin{itemize}
\item[5.] there exists $\nu>0$ such that
$$\gamma(x)\cdot
n(x)\geq\nu\qquad\text{and}\qquad \gamma^{\theta_1,\theta_2}(x)\cdot
n(x)\geq\nu\quad\text{uniformly in $\theta_1,\theta_2$},$$
\item[6.] $g,\gamma,\bar
  \theta,\gamma^{\theta_1,\theta_2},g^{\theta_1,\theta_2}$ 
  are Lipschitz continuous in $x$ uniformly  
  in $\theta_1,\theta_2$.
\end{itemize} 

Now we state new continuous dependence results for the for the
Bellman-Isaacs equations \eqref{HJB} combined with the controlled reflection 
boundary conditions \eqref{ctrlBC}:
\begin{theorem}
Assume $u_1$ and $u_2$ satisfy the boundary value problem \eqref{HJB}
and \eqref{ctrlBC} with coefficients
$\sigma_1,b_1,c_1,f_1,\gamma_1,g_1$ and
$\sigma_2,b_2,c_2,f_2,\gamma_2,g_2$ respectively, where both sets of
coefficients satisfy assumptions 1--6 above. 

Then $u_1,u_2$ belong to $C^{0,\beta}(\bar\Omega)$ for some
$\beta\in(0,\alp]$, and
\begin{align*}
  \lambda |u_1-u_2|_0 \leq &\,  C\sup_{\Theta_1 \times\Theta_2}\Big[
   |\sigma_1^{\theta_1,\theta_2}-\sigma_2^{\theta_1,\theta_2}|_0^{\beta} +
   |b_1^{\theta_1,\theta_2}-b_2^{\theta_1,\theta_2}|_0^{\beta} \Big] \\ 
   & + C\sup_{\Theta_1 \times \Theta_2}\Big[
   |c_1^{\theta_1,\theta_2}-c_2^{\theta_1,\theta_2}|_0 +
   |f_1^{\theta_1,\theta_2}-f_2^{\theta_1,\theta_2}|_0\Big] \\ 
&
  +
  \frac{C}{\nu}\sup_{\Theta_1\times\Theta_2}|g^{\theta_1,\theta_2}_1
  -g^{\theta_1,\theta_2}_2|_0+\frac{C}{\nu^{\alp}}   
   \sup_{\Theta_1\times\Theta_2}|\gamma^{\theta_1,\theta_2}_1
  -\gamma^{\theta_1,\theta_2}_2|_0^{\beta}.  
\end{align*}
\end{theorem}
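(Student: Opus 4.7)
The plan is to apply Theorem \ref{main} to the two Bellman--Isaacs boundary value problems, after verifying the structural hypotheses and identifying the parameters $\delta_1,\delta_2,\mu_1,\mu_2$ in (D1)--(D2). First I would record that under assumptions 1--6 both pairs $(F_i,G_i)$ satisfy (H0)--(H3), (HB1), (HB2), and $(\overline{\mathrm{H2}})$; this is standard for sup/inf of linear expressions (cf.\ \cite{cil,JK:Ell}). Theorem \ref{WP}(c) then gives $u_1,u_2\in C^{0,\beta}(\bar\Omega)$ for some $\beta\in(0,\alp]$, and by assumption~5 we may take $\nu_1\wedge\nu_2\geq \nu$.

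The main step is the verification of (D1). Using the elementary inequality $\inf_{\theta_1}\sup_{\theta_2}a^{\theta_1,\theta_2}-\inf_{\theta_1}\sup_{\theta_2}b^{\theta_1,\theta_2}\leq \sup_{\theta_1,\theta_2}(a^{\theta_1,\theta_2}-b^{\theta_1,\theta_2})$, the difference $F_2(y,r,q,Y)-F_1(x,r,p,X)$ is bounded pointwise in $(\theta_1,\theta_2)$ by a sum of four terms. For the zeroth order and source terms one gets directly $|c_1-c_2|_0|r|+|f_1-f_2|_0\leq K(|c_1-c_2|_0+|f_1-f_2|_0)$, which I would absorb into $\delta_1$. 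For the drift term, write $b_1\cdot p-b_2\cdot q=(b_1-b_2)\cdot p+b_2\cdot(p-q)$; since $|p|\leq K\eta/\eps$ and $|p-q|\leq K(\eta^2+B)$ from (D1), Young's inequality gives $|b_1-b_2|_0|p|\leq \tfrac{1}{2\eps^2}|b_1-b_2|_0^2+K\eta^2$, contributing to $\delta_2^2/\eps^2$. For the second order term, one uses the standard matrix computation: multiplying the matrix inequality in (D1) on both sides by $(\sigma_1,\sigma_2)$ in block-diagonal form and taking traces yields
\[
\tr[\sigma_1\sigma_1^TX]-\tr[\sigma_2\sigma_2^TY]\leq \tfrac{K}{\eps^2}|\sigma_1-\sigma_2|_0^2+K(\eta^2+B)(|\sigma_1|_0^2+|\sigma_2|_0^2).
\]
Collecting all terms, (D1) holds with
\[
\delta_1=K\!\!\sup_{\Theta_1\times\Theta_2}\!\!(|c_1-c_2|_0+|f_1-f_2|_0),\qquad
\delta_2=K\!\!\sup_{\Theta_1\times\Theta_2}\!\!(|\sigma_1-\sigma_2|_0+|b_1-b_2|_0).
\]

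For (D2), the same sup/inf reduction applied to the affine boundary Hamiltonians $G_i(x,p)=\inf_{\theta_1}\sup_{\theta_2}\{\gamma_i^{\theta_1,\theta_2}\cdot p-g_i^{\theta_1,\theta_2}\}$ immediately gives
\[
G_2(x,p)-G_1(x,p)\leq \sup_{\Theta_1\times\Theta_2}|g_1-g_2|_0+\sup_{\Theta_1\times\Theta_2}|\gamma_1-\gamma_2|_0\,|p|,
\]
so (D2) holds with $\mu_1=\sup|g_1-g_2|_0$ and $\mu_2=\sup|\gamma_1-\gamma_2|_0$. Theorem \ref{main} then yields the desired estimate, with the exponent $\alp\wedge\beta=\beta$ appearing on the $\delta_2$ and $\mu_2/\nu$ terms. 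A symmetric argument interchanging $u_1$ and $u_2$ upgrades the one-sided bound to the two-sided estimate $\lambda|u_1-u_2|_0\leq \cdots$.

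The main technical obstacle I anticipate is the book-keeping in the verification of (D1): one has to track how the new parameter $B$ (which carries the boundary data through the test function) interacts with the matrix inequality and the $\eps,\eta$ scaling, and to make sure that all the Young-type splittings deposit terms only into $\delta_1$, $\delta_2^2/\eps^2$, $\eta^2$, or the $B$-multiplied terms, so that the hypothesis of Theorem \ref{main} is met with the identifications above.
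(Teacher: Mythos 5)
Your proposal is correct and follows essentially the same route as the paper: the paper also deduces the result directly from Theorems \ref{WP} and \ref{main} with exactly the identifications $\delta_1\sim\sup(|c_1-c_2|_0+|f_1-f_2|_0)$, $\delta_2^2\sim\sup(|\sigma_1-\sigma_2|_0^2+|b_1-b_2|_0^2)$, $\mu_1=\sup|g_1-g_2|_0$, $\mu_2=\sup|\gamma_1-\gamma_2|_0$, deferring the verification of (D1) to \cite{JKContDep,JK:Ell}. Your explicit sketch of that verification (the $\inf\sup$ reduction, the Young splitting of the drift term, and the block-matrix trace computation for the second-order term) is the standard argument those references carry out, so no gap.
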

This result is a direct consequence of Theorems \ref{WP} and
\ref{main}. In this case $\delta_1$ correspond to the second line in
the estimate, 
\begin{gather*}
\delta_2^2=C\sup_{\Theta_1 \times
  \Theta_2}[|\sigma^{\theta_1,\theta_2}_1-\sigma^{\theta_1,\theta_2}_2|^2+|b^{\theta_1,\theta_2}_1-b^{\theta_1,\theta_2}_2|^2],\\
 \mu_1=\sup_{\Theta_1 \times
  \Theta_2}|g^{\theta_1,\theta_2}_1-g^{\theta_1,\theta_2}_2|_0,\qquad \mu_2=\sup_{\Theta_1 \times
  \Theta_2}|\gamma^{\theta_1,\theta_2}_1-\gamma^{\theta_1,\theta_2}_2|_0.
\end{gather*}
The dependence on the equation is as in \cite{JKContDep,JK:Ell} and
  the derivation of $\delta_1$ and $\delta_2$ is explained there. 

By Theorem \ref{VVthm} we have for the first time the rate of
  convergence of the vanishing viscosity method for the boundary value
  problem \eqref{HJB} and \eqref{ctrlBC}, i.e.
\begin{align}
\label{mHJB}
\inf_{\theta_1\in\Theta_1}\sup_{\theta_2\in\Theta_2}\left\{-
\tr[(\sigma\sigma^T)^{\theta_1,\theta_2}(x) D^2u] -
b^{\theta_1,\theta_2}(x) Du - 
c^{\theta_1,\theta_2}(x) u -
f^{\theta_1,\theta_2}(x)\right\}=\mu\Delta u
\end{align}
in $\Omega$, with \eqref{ctrlBC} as boundary conditions. The result is
the following:

\begin{theorem}
Assume $u$ and $u_{\mu}$ satisfy \eqref{HJB} and \eqref{mHJB}
respectively with boundary values \eqref{ctrlBC}, and that assumptions
1 -- 6 hold.

Then $u,u_{\mu}$ belong to $C^{0,\beta}(\bar\Omega)$ for some
$\beta\in(0,\alp]$ and  
$$|u-u_\mu|_0\leq C\mu^{\frac{\beta}{2}}.$$
\end{theorem}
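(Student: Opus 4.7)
The plan is to reduce this to Theorem \ref{VVthm} by verifying that its hypotheses hold for the Bellman--Isaacs operator in \eqref{HJB} and the controlled reflection boundary condition in \eqref{ctrlBC} under assumptions 1--6. Once all six standing hypotheses (H0), (H1), ($\ol{\mathrm{H2}}$), (H3), (HB1), (HB2) are in place, Theorem \ref{VVthm} immediately produces the Hölder regularity of $u$ and $u_\mu$ and the rate $|u-u_\mu|_0\leq C\mu^{\beta/2}$.

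For the equation, (H0) is assumed, (H1) follows from continuity of the coefficients together with the fact that an $\inf\sup$ over compact $\Theta_1\times\Theta_2$ of a uniformly equicontinuous family is continuous, and (H3) is a direct consequence of $c^{\theta_1,\theta_2}\geq\lambda>0$ (assumption 3) since $\inf\sup$ preserves monotonicity in $u$. The main verification is ($\ol{\mathrm{H2}}$): for each fixed $(\theta_1,\theta_2)$ one bounds
$$
F^{\theta_1,\theta_2}(y,r,q,Y)-F^{\theta_1,\theta_2}(x,r,p,X)
$$
by combining (i) the Lipschitz estimate on $\sigma^{\theta_1,\theta_2}$ (assumption 1) with the matrix inequality \eqref{XY-ineq} to control $\tr[(\sigma\sigma^T)^{\theta_1,\theta_2}(y)Y-(\sigma\sigma^T)^{\theta_1,\theta_2}(x)X]$ by $K(|x-y|^2/\eps^2+\eta^2+\eps^2+B)$, (ii) the Lipschitz bound on $b^{\theta_1,\theta_2}$ together with the constraints $|x-y|\leq K\eta\eps$ and $|p-q|\leq K(\eta^2+\eps^2+B)$ to handle the drift term, and (iii) the $\alp$-Hölder bound on $c^{\theta_1,\theta_2}, f^{\theta_1,\theta_2}$ (assumption 2) to absorb the remaining terms into $K|x-y|^{\alp}$. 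Taking $\inf\sup$ preserves the bound uniformly in $(\theta_1,\theta_2)$, which yields ($\ol{\mathrm{H2}}$); this is the computation cited from \cite{cil,JK:Ell} in Section \ref{sec:ex}.

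For the boundary condition, (HB1) follows from assumption 5 since
$$
G(x,p+\mu n(x))-G(x,p)\;\geq\;\mu\inf_{\theta_1,\theta_2}\gamma^{\theta_1,\theta_2}(x)\cdot n(x)\;\geq\;\nu\mu,
$$
using that $\inf\sup$ of functions differing by a common additive term $\mu\,\gamma^{\theta_1,\theta_2}\cdot n$ that is uniformly bounded below by $\nu\mu$ gains at least $\nu\mu$. Assumption 6 gives (HB2) by applying the Lipschitz bound pointwise in $(\theta_1,\theta_2)$ and then passing to $\inf\sup$.

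The only substantive obstacle is the verification of ($\ol{\mathrm{H2}}$) with the extra parameter $B$ coming from the refined formulation used in this paper; however, the additional $B$-term enters \eqref{XY-ineq} only as a positive-definite perturbation $K B\,\mathrm{Id}$, so the standard estimate $\tr[(\sigma\sigma^T)^{\theta_1,\theta_2}(x)X-(\sigma\sigma^T)^{\theta_1,\theta_2}(y)Y]\leq K(\tfrac{|x-y|^2}{\eps^2}+\eta^2+\eps^2+B)$ survives without modification. Having verified all the hypotheses, Theorem \ref{VVthm} applies directly to yield both statements of the theorem.
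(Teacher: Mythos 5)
Your proposal is correct and follows exactly the route the paper takes: the paper presents this theorem as a direct consequence of Theorem \ref{VVthm}, having already asserted in Section \ref{sec:ex} (with references to \cite{cil,JK:Ell}) that assumptions 1--6 yield (H1), ($\ol{\mathrm{H2}}$), (H3) for the Bellman--Isaacs operator and (HB1), (HB2) for the controlled reflection condition. Your explicit verifications of these hypotheses (in particular the trace estimate obtained by testing \eqref{XY-ineq} against $(\sigma(x)e_i,\sigma(y)e_i)$, and the $\inf\sup$ monotonicity arguments for (HB1)--(HB2)) simply fill in details the paper delegates to the cited references.
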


\section{Extensions}
\label{sec:ext}

It is possible to consider many kinds of extensions of the results in this
paper. We will consider three cases:  (i)
$\Omega$ unbounded, (ii) time dependent problems, and (iii)
quasilinear equations. In the two first cases the results cover
e.g. Bellman-Isaacs equations under natural assumptions on the data.
\subsection{Unbounded domains}
Let  $\Omega$ be unbounded and let
(H0u) denote assumption (H0) without the boundedness assumption. If we
assume that our sub and supersolutions $u$ and $v$ are bounded, then we
will get continuous dependence and regularity results simply by
following the arguments in this paper replacing the test function
$\phi_a$ by the standard modification
$$\phi_a(x,y)+\gamma(|x|^2+|y|^2),\quad \gamma>0.$$
The new test function will insure existence of maximum points when we
double the variables, and at the end of the proof it turns out (as usual)
that all terms depending on $\gamma$ will vanish when
$\gamma\ra0$. In the proof $B$ will now depend also on the
$\gamma$-terms and the $\gamma$-terms will tend to zero as
$\gamma\ra0$ with a speed depending on $B$, see assumption (D1u)
below. By careful computations and fixing $\eps$ before sending
$\gamma\ra0$ we can conclude as before. We refer to \cite{JK:Ell} for
the details when $\Omega=\R^N$.

The corresponding continuous dependence result will now be given without
further proof. 
We modify assumption (D1) so it corresponds to our new test function, see also
\cite{JK:Ell}: 
\medskip

\noindent {\bf (D1u)} There are $\delta_1$, $\delta_2\geq0$, a modulus
$\omega$, and $K_F(K)\geq 0$,  such that for any $K\geq0$,
\begin{align*}
  & F_2(y,r,q,Y)  -  F_1(x,r,p,X)
\leq  K_F(K) \Big(\eta ^2+
\delta_1+  \frac{1}{\e^2}\delta_2^2
 +B + \gamma(1+|x|^2+|y|^2)\Big), 
\end{align*}
for $\eps,\gamma\in(0,1]$, $\eta:=\eps^{\frac{\alp}{2-\alp}}$, $B\ge 0$, 
$x,y \in\bar\Omega$, $r\in\R$, $|r|\leq K$, $p,q\in \R^N$ and
$X,Y \in \mathbb{S}^N$ satisfying $|x-y|\le K \eta\e$, $|x|+|y|\leq\gamma^{1/2}\omega(\gamma)(1+B)$, $|p-q| \le K(
\eta^2 +  B+\gamma(|x|+|y|))$, $|p|+|q|\leq
K(\frac{\eta}{\eps}+\eta^2+B+\gamma(|x|+|y|))$, 
and 
\begin{align*}
&\begin{pmatrix}
              X & 0  \\  
              0 & -Y  
\end{pmatrix}
\le
\ \frac{K}{\e^2}
\begin{pmatrix}
              Id& -Id  \\  
              -Id & Id  
\end{pmatrix}
+K (\eta^2 +B+ \gamma)
\begin{pmatrix}
              Id& 0  \\  
              0 & Id  
\end{pmatrix}.
\end{align*}

\begin{theorem}[$\Omega$ unbounded] 
\label{ThmUnbnd}
Assume (H0u), (H1), (H3), (HB1), and (HB2) hold for $H_1,H_2,G_1,G_2$,
  and $u_1,u_2 \in  C^{0,\beta}(\bar{\Omega})$ for $\beta\in(0,1]$. Define
  $\nu^2=(\nu_1\vee\nu_2)(\nu_1\wedge\nu_2)$ and
  $\lambda=\lambda_{1,|u_1|_0}\vee \lambda_{2,|u_2|_0}$.

If (D1u) and (D2) hold and $u_1$ and $u_2$ satisfy the boundary value
  problems \eqref{E} and \eqref{E_tmp} respectively then there exist a
  constant $C>0$ (depending only on $K_F$, $K$, $K_G$ 
, $|u_1|_\beta$, $|u_2|_\beta$, $\alp$)
such that  
\begin{equation*}
\lambda \max_{\bar{\Omega}} (u_1-u_2) \ \leq\ C\Big(\delta_1 +
\delta_2^{\alp\wedge\beta}+\frac{\mu_1}{\nu}+\Big(\frac{\mu_2}{\nu}\Big)^{\alp\wedge\beta}\Big).   
\end{equation*}
\end{theorem}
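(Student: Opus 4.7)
The plan is to adapt the proof of Theorem \ref{main} by adding a coercive quadratic penalty to the test function so that maxima exist on the unbounded domain, and then to send the penalty parameter to zero only after the main estimate has been established.

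First I would define, for $\gamma\in(0,1]$,
$$\phi_\gamma(x,y)=\phi(x,y)+\gamma(|x|^2+|y|^2),$$
where $\phi(x,y)$ is exactly the Barles-type test function used in the proof of Theorem \ref{main} (involving $\tfrac{1}{\eps^2}|x-y|^2$, $\tfrac{A}{\eps^2}(d(x)-d(y))^2$, $-B(d(x)+d(y))$, and the $\tilde C_2$ correction). Because $u_1$ and $u_2$ are bounded and $\phi_\gamma\to+\infty$ as $|x|+|y|\to\infty$, the supremum of $\psi(x,y)=u_1(x)-u_2(y)-\phi_\gamma(x,y)$ is attained at some $(\bar x,\bar y)\in\bar\Omega\times\bar\Omega$. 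The inequality $2\psi(\bar x,\bar y)\ge\psi(\bar x,\bar x)+\psi(\bar y,\bar y)$ combined with the lower bound for $\phi$ from Lemma~\ref{lem_pos} (for $A$ large) and boundedness of $u_1,u_2$ yields, as in the bounded case, the estimates
$$|\bar x-\bar y|\le \tilde K_1\eta\eps,\qquad \tfrac{1}{\eps^2}|\bar x-\bar y|^2\le \tilde K_1\eta^2,$$
and additionally the new coercivity bound
$$\gamma(|\bar x|^2+|\bar y|^2)\le |u_1|_0+|u_2|_0+K,$$
which I would further sharpen (by absorbing the Barles correction terms bookkept in $B$) into $|\bar x|+|\bar y|\le\gamma^{1/2}\omega(\gamma)(1+B)$ for a suitable modulus $\omega$, as required by (D1u).

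Second, I would choose $A$ and $B$ exactly as in the proof of Theorem~\ref{main}, namely $A=K$ and $B=K(\eta^2+\eps^2)+K\nu^{-1}(\mu_1+\mu_2\eta/\eps)$, so that Lemma~\ref{lem_BC} rules out the boundary condition at $\bar x$ and $\bar y$ (the added quadratic term contributes $2\gamma x\cdot n(x)$ to the gradient in the normal direction, which, being bounded by $2\gamma|\bar x|\le 2\gamma^{1/2}\omega(\gamma)(1+B)$, is absorbed into $B$ for $\gamma$ small). Applying the maximum principle for semicontinuous functions to $\psi$ at $(\bar x,\bar y)$ produces subjets/superjets whose gradients differ by at most $K\eta^2+KB+K\gamma(|\bar x|+|\bar y|)$ and whose Hessians satisfy the matrix inequality in (D1u) (the extra $\gamma$ on the diagonal coming directly from $D^2[\gamma(|x|^2+|y|^2)]=2\gamma\,\mathrm{diag}(I,I)$). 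Assumption (D1u) together with (H3) then delivers
$$\lambda(u_1(\bar x)-u_2(\bar y))\le K_F(K)\Big(\eta^2+\delta_1+\tfrac{1}{\eps^2}\delta_2^2+B+\gamma(1+|\bar x|^2+|\bar y|^2)\Big),$$
and the inequality for $u_1(x)-u_2(x)$ follows from the definition of $\psi$ and the bounds on $\phi_\gamma$.

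Third, and this is where the argument must be executed carefully, I would send $\gamma\to 0$ with $\eps$ held fixed. The term $\gamma(1+|\bar x|^2+|\bar y|^2)$ tends to zero because the coercivity bound forces $\gamma(|\bar x|^2+|\bar y|^2)\to 0$ as $\gamma\to 0$ (the factor $\gamma^{1/2}\omega(\gamma)$ in (D1u) is designed precisely to kill this term for fixed $B$, hence fixed $\eps$). After passing to the limit I am left with the same inequality as in the bounded case, and I optimize in $\eps$ exactly as at the end of the proof of Theorem~\ref{main}, setting $\eps=\eps_1\vee\eps_2$ with $\eta(\eps_1)^2=\delta_2^{\bar\alp}$ and $\eta(\eps_2)^2=(\mu_2/\nu)^{\bar\alp}$, to obtain the stated bound.

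The main obstacle is the double limit: the constants $B$ and the choice of $\eps$ depend on the data but must be held fixed while $\gamma\to 0$, and one needs the bound on $\gamma(|\bar x|^2+|\bar y|^2)$ to be quantitatively $o(1)$ in $\gamma$ uniformly in the fixed $\eps$. This is the role of the modulus $\omega(\gamma)$ in assumption (D1u); verifying that the standing coercivity estimate actually implies $|\bar x|+|\bar y|\le\gamma^{1/2}\omega(\gamma)(1+B)$ is the delicate bookkeeping step, but it proceeds exactly as in \cite{JK:Ell} for the case $\Omega=\R^N$, so I would simply invoke that computation and only verify that the additional boundary terms (handled entirely through the Barles test function $\phi$) do not interfere with it.
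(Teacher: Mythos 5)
Your proposal follows exactly the route the paper itself indicates for Theorem \ref{ThmUnbnd}: the paper states this result ``without further proof,'' merely sketching the penalization $\phi_a(x,y)+\gamma(|x|^2+|y|^2)$, the absorption of the $\gamma$-terms into $B$, and the crucial order of limits (fix $\eps$ first, then send $\gamma\to0$), deferring the bookkeeping to \cite{JK:Ell}. Your write-up fills in these same steps in the same order, including the role of the modulus $\omega(\gamma)$ in (D1u), so it is essentially the paper's argument.
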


\subsection{Time dependent case}
Consider a Cauchy-Neumann problem of the form:
\begin{align}
\label{tEE}
u_t+F(t,x,u,Du,D^2u)&=0&&\text{in}\quad (0,T)\times\Omega,\\
G(x,Du)&=0&&\text{on}\quad (0,T)\times\del\Omega,\label{tBV}\\
u(0,x)&=u_0(x)&&\text{on}\quad \{0\}\times\Omega.\label{tIV}
\end{align}
In this case we get results by similar arguments as above by replacing
the test function $\phi_a$ by
$$\bar\sigma t+e^{Kt}\phi_a(x,y), \quad \bar\sigma>0.$$
We have to replace assumptions (H1) -- (H3) and (D1) by assumptions
(H1p) -- (H3p) and (D1p) depending on $t$. In (H1p) we assume in addition
continuity in $t$, in (H3p) we allow $\lambda_R\geq0$, and in the last
two assumptions ((H2p) and (D1p)) we simply assume that (H2) and (D1) hold uniformly in
$t$. Note that one can always reduce a problem with  $\lambda_R\in\R$, via an
exponential scaling of $u$, to a problem with $\lambda_R\geq0.$

Now existence, uniqueness, and regularity results follows as before by
appropriately choosing the constants $\bar\sigma$ and $K$. Note
however that in the result corresponding to Theorem \ref{WP} (c) the
H\"older exponent is always $\alp$ and ``maximal regularity'' is
achieved regardless of the value $\lambda$. We refer to
\cite{JKContDep} for such results in the case $\Omega=\R^N$. Now we
state the continuous dependence result without further proof.

\begin{theorem}[Time dependent case] 
\label{ThmTime}
Assume (H0), (H1p), (H3p), (HB1), and (HB2) hold for $H_1,H_2,G_1,G_2$,
$u_1,u_2\in C([0,T]\times\bar\Omega)$, and $u_{1,0},u_{2,0}\in
C^{0,\alp}(\bar\Omega)$ for some $\alp\in(0,1]$. Define
  $\nu^2=(\nu_1\vee\nu_2)(\nu_1\wedge\nu_2)$.

If (D1p) and (D2) hold and $u_1$ and $u_2$ are sub and supersolutions
of initial boundary value problems \eqref{tEE}, \eqref{tBV}, and 
  \eqref{tIV} respectively for $F_1,G_1,u_{1,0}$ and $F_2,G_2,u_{2,0}$,
then there exist a constant $C>0$ (depending only on $K_F$, $K$,
$K_G$, $|u_{1,0}|_\alp$,$|u_{2,0}|_\alp$, $T$, $\alp$)
such that for $t\in(0,T)$,
\begin{equation*}
\max_{\bar{\Omega}} (u_1(t,\cdot)-u_2(t,\cdot))
\ \leq\ |(u_{1,0}-u_{2,0})^+|_0+Ct\Big(\delta_1 + 
\delta_2^{\alpha}+\frac{\mu_1}{\nu}+\Big(\frac{\mu_2}{\nu}\Big)^{\alp}\Big).
\end{equation*}
\end{theorem}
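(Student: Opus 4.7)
The plan is to follow the proof of Theorem \ref{main}, modifying the test function to accommodate time as suggested in the preceding paragraph. Let $\phi$ denote Barles' test function from that proof (with the same $A$, $B$), and for parameters $\bar\sigma, K > 0$ (small, large respectively, to be fixed), I would set
\[
\Psi(t,x,y) = u_1(t,x) - u_2(t,y) - \bar\sigma t - e^{Kt}\phi(x,y)
\]
on $[0,T]\times\bar\Omega\times\bar\Omega$ and argue at a maximum point $(\hat t,\hat x,\hat y)$. The penalization $\bar\sigma t$ will push the maximum toward $t=0$, and the exponential factor $e^{Kt}$ produces an extra $Ke^{Kt}\phi$ on $t$-differentiation, replacing the strict monotonicity in $u$ that is no longer available since (H3p) only asks $\lambda_R \geq 0$.

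Suppose first $\hat t > 0$. Since $G$ is $t$-independent, Lemma \ref{lem_BC} applies as in Theorem \ref{main}, so the equation holds at both $\hat x$ and $\hat y$. The parabolic maximum principle (Theorem 8.3 of \cite{cil}) yields $(p_t,p,X) \in \bar{\mathcal P}^{2,+}u_1(\hat t,\hat x)$ and $(q_t,q,Y) \in \bar{\mathcal P}^{2,-}u_2(\hat t,\hat y)$ with
\[
p_t - q_t = \bar\sigma + Ke^{K\hat t}\phi(\hat x,\hat y),
\]
and with $p,q,X,Y$ given by $e^{K\hat t}$ times the corresponding quantities in the elliptic case. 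Since $t\in[0,T]$, the factor $e^{KT}$ is bounded and is absorbed into the generic constant, so (D1p) applies. Subtracting the viscosity inequalities and using (H3p), (D1p), (D2) as in Theorem \ref{main} gives
\[
\bar\sigma + Ke^{K\hat t}\phi(\hat x,\hat y) \leq C_1\phi(\hat x,\hat y) + C_2\Bigl(\delta_1 + \tfrac{1}{\eps^2}\delta_2^2 + \eta^2 + \tfrac{\mu_1}{\nu} + \tfrac{\mu_2\,\eta}{\nu\,\eps}\Bigr).
\]
Choosing $K$ so large that $Ke^{K\hat t} \geq C_1$ absorbs the $\phi$-terms, and optimizing in $\eps$ exactly as at the end of Theorem \ref{main}, yields
\[
\bar\sigma \leq C\Bigl(\delta_1 + \delta_2^{\alp} + \tfrac{\mu_1}{\nu} + \bigl(\tfrac{\mu_2}{\nu}\bigr)^{\alp}\Bigr).
\]
Thus if $\bar\sigma$ is chosen strictly larger than this right-hand side, one must have $\hat t = 0$.

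If $\hat t = 0$, using $\Psi(0,\hat x,\hat y) \geq \Psi(0,x,x)$ for all $x$, the H\"older regularity of the initial data, and the lower bound on $\phi$ from Lemma \ref{lem_pos}, one controls $|\hat x-\hat y| = O(\eps^{2/(2-\alp)})$ and obtains
\[
\Psi(0,\hat x,\hat y) \leq |(u_{1,0}-u_{2,0})^+|_0 + o_{\eps\to 0}(1).
\]
Evaluating $\Psi(t,x,x) \leq \Psi(\hat t,\hat x,\hat y)$ and noting $e^{Kt}\phi(x,x) = -2Be^{Kt}d(x) \leq 0$,
\[
u_1(t,x) - u_2(t,x) \leq |(u_{1,0}-u_{2,0})^+|_0 + \bar\sigma t + o_{\eps\to 0}(1).
\]
Sending $\eps \to 0$ (with $B$ coupled to $\eps$ and the data errors as in Theorem \ref{main}) and then letting $\bar\sigma$ approach its lower bound gives the claim.

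The main obstacle will be verifying in the case $\hat t > 0$ that the $e^{K\hat t}$-rescaling of $p,q,X,Y$ stays within the admissible region of (D1p): the bounds on $|p|+|q|$, $|p-q|$, and the matrix inequality all acquire a factor $e^{K\hat t} \leq e^{KT}$ which must be absorbed into the generic constant of (D1p). A secondary technicality is organizing the two-stage optimization --- first in $\eps$ inside the contradiction step, then jointly with $\bar\sigma$ at the end --- so that the bounds from the two cases are consistent.
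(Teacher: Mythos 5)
Your overall strategy --- the test function $\bar\sigma t+e^{Kt}\phi_a(x,y)$, the dichotomy $\hat t>0$ versus $\hat t=0$, forcing $\hat t=0$ by choosing $\bar\sigma$ just above the bound produced by (D1p)/(D2) --- is exactly the route the paper indicates (the paper states this theorem ``without further proof'' and only sketches the modification of the test function, referring to \cite{JKContDep} for details). So the approach matches. However, your treatment of the case $\hat t=0$ has a genuine gap. You claim $\Psi(0,\hat x,\hat y)\le|(u_{1,0}-u_{2,0})^+|_0+o_{\eps\to 0}(1)$ and propose to ``send $\eps\to0$'' at the end. This is not available: the parameter $\eps$ must be \emph{fixed} at its optimal value before the maximum point is located, because both $\bar\sigma$ (which contains $\delta_2^2/\eps^2$ from (D1p)) and the boundary penalization $B=K(\eta^2+\eps^2+a)+\tfrac{K}{\nu}(\mu_1+\mu_2\tfrac{\eta}{\eps})$ blow up as $\eps\to0$ whenever $\delta_2>0$ or $\mu_2>0$. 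With $\eps$ fixed, the estimate at $\hat t=0$ reads
\begin{equation*}
\Psi(0,\hat x,\hat y)\ \le\ |(u_{1,0}-u_{2,0})^+|_0+[u_{2,0}]_\alp|\hat x-\hat y|^\alp-\tfrac{1}{2\eps^2}|\hat x-\hat y|^2+K_0\eps^2+B\big(d(\hat x)+d(\hat y)\big)
\ \le\ |(u_{1,0}-u_{2,0})^+|_0+C\eta^2+2B,
\end{equation*}
and the terms $C\eta^2+2B$ carry \emph{no} factor of $t$.

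Concretely, this means your argument does not deliver the stated right-hand side $Ct(\delta_1+\delta_2^{\alp}+\mu_1/\nu+(\mu_2/\nu)^{\alp})$: the $\mu_1/\nu$ part of $B$ survives as an additive $C\mu_1/\nu$ (not $Ct\,\mu_1/\nu$), and balancing the $t$-independent doubling error $C\eta^2$ against the $t$-weighted equation error $Ct\,\delta_2^2/\eps^2$ yields $Ct^{\alp/2}\delta_2^{\alp}$, not $Ct\,\delta_2^{\alp}$ (this $t^{1/2}$-type dependence is precisely what \cite{JKContDep} obtains in the whole-space case). To complete the proof as stated you must explain how every error term acquires the factor $t$ --- in particular how the boundary penalization $B(d(\hat x)+d(\hat y))$ at $\hat t=0$ and the doubling error $\eta^2$ are made proportional to $t$ --- or else prove a correspondingly weakened form of the estimate. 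Your secondary worry about the factor $e^{K\hat t}\le e^{KT}$ in the admissibility conditions of (D1p) is, by contrast, harmless and is absorbed into the constants as you say.
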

Note that we do not need to assume that $u_1$ and $u_2$ are H\"older
continuous (in $x$) a priori. In fact this regularity follows from the above
theorem! To understand why, and to see details about the derivation in
the case $\Omega=\R^N$, we refer to \cite{JKContDep}.
\subsection{Some quasilinear equations}
\label{sec:extq}
Consider equations of the form
\begin{align}
\label{QLin_2}
-\tr[\sigma(x,Du)\sigma(x,Du)^T D^2u] - f(x,u,Du)+\lambda u =0
\quad\text{in}\quad \Omega,
\end{align}
where $\lambda>0$, $f(x,r,p)$ continuous, increasing in $r$,
$a(x,p)=\sigma(x,p)\sigma(x,p)^T$, and 
\begin{align*}
|\sigma(x,p)-\sigma(y,q)|&\leq K\left(|x-y|+\frac{|p-q|}{1+|p|+|q|}\right),\\
|f(x,r,p)-f(y,r,q)|&\leq K\big[(1+|p|+|q|)|x-y|+|p-q|\big].
\end{align*}
In this case (H1) and (H3) hold in addition to an assumption similar
to (H2). If we also assume (H0), (HB1), and (HB2), then existence and 
comparison for the boundary value problem 
\eqref{QLin_2} and \eqref{BV} was proved in \cite{ba3}. 

More general fully non-linear equations with ``quasilinear'' gradient
dependence can also be considered. We omit this to get a shorter and
clearer presentation. For the same reasons we also restrict ourselves
to the case of Lipschitz continuous solutions and data,
i.e. $\alp=\beta=1,\eta\equiv\eps$.

In this case the quasilinear term in the equation gives rise to a term like 
$$\frac1{\eps^2}|\sigma(p)-\sigma(q)|^2$$
in the proof of the comparison result (when $\sigma$ does not
depend on $x$). By \eqref{pqpq} in Lemma
\ref{lem_deriv}, 
$$|p-q|\leq K|p|\wedge|q||x-y|+K(\eps^2+B)$$
when $\eps$ is small enough, and hence by the assumptions on $\sigma$,
\begin{align}
\nonumber
\frac1{\eps^2}|\sigma(p)-\sigma(q)|^2&\leq \frac{K}{\eps^2}|x-y|^2
+\frac K{\eps^2}\frac {\eps^4+B^2}{1+|p|^2+|q|^2}\\
&\leq \frac 
K{\eps^2}|x-y|^2+K\eps^2+\frac K{\eps^2}B^2.\label{p-q-term}
\end{align}
This computation motivates replacing assumption (D1) by:

\medskip

\noindent {\bf (D1q)} There are $\delta_1$, $\delta_2\geq0$,
and $K_F(K)\geq 0$ such that for any $K\geq0$,  
\begin{align*}
  & F_2(y,r,q,Y)  -  F_1(x,r,p,X)
\leq  K_F(K) \Big(\eta ^2+
\delta_1+  \frac{1}{\e^2}\delta_2^2
 +B +\frac1{\eps^2}B^2\Big),
\end{align*}
for $0<\eps\leq 1$, $B\ge 0$, 
$x,y \in\bar\Omega$, $r\in\R$, $|r|\leq K$, $p,q\in \R^N$ and
$X,Y \in \mathbb{S}^N$ satisfying $|x-y|\le K \e^2$, $|p-q| \le
K|p|\wedge|q|\eps^2 + K(\eps^2 + B),$  $|p|+|q|\leq
K(1+\eps^2+B)$, and
\begin{align*}
&\begin{pmatrix}
              X & 0  \\  
              0 & -Y  
\end{pmatrix}
\le
\ \frac{K}{\e^2}
\begin{pmatrix}
              Id& -Id  \\  
              -Id & Id  
\end{pmatrix}
+K (\e^2 +B)
\begin{pmatrix}
              Id& 0  \\  
              0 & Id  
\end{pmatrix}.
\end{align*}

The continuous dependence result now becomes:

\begin{theorem}[Quasilinear equations, Lipschitz solutions] 
\label{ThmQlin}
Assume (H0), (H1), (H3), (HB1), and (HB2) hold for $H_1,H_2,G_1,G_2$, and
$u_1,u_2\in C^{0,1}(\bar\Omega)$. Define
  $\nu^2=(\nu_1\vee\nu_2)(\nu_1\wedge\nu_2)$.

If (D1q) and (D2) hold and $u_1$ and $u_2$ are sub
and supersolutions of the boundary value problems \eqref{E} and 
  \eqref{E_tmp} respectively, then there exist a constant $C>0$
  (depending only on $K_F$, $K$, $K_G$) such that 
\begin{equation*}
\lambda \max_{\bar{\Omega}} (u_1-u_2) \ \leq\ C\Big(\delta_1 +
\delta_2+\frac{\mu_1}{\nu}+\frac{\mu_2}{\nu}\Big).
\end{equation*}
\end{theorem}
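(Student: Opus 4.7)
The plan is to imitate the proof of Theorem \ref{main}, specialised to the Lipschitz case $\alpha=\beta=1$ (so $\eta\equiv\e$), and to exploit the finer gradient comparison \eqref{pqpq} in Lemma \ref{lem_deriv} to absorb the quasilinear diffusion term. First I would reduce without loss of generality to $\delta_1,\delta_2,\mu_1/\nu,\mu_2/\nu\le 1$, then double the variables, setting $\psi(x,y)=u_1(x)-u_2(y)-\phi(x,y)$ with the Barles-type test function $\phi$ used in the proof of Theorem \ref{main}, only now with $a=\e\eta=\e^2$.

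Let $(\bar x,\bar y)$ realise $\max_{\bar\Omega^2}\psi$. The standard doubling argument combined with Lemma \ref{lem_pos} and the Lipschitz regularity of $u_1,u_2$ yields $|\bar x-\bar y|\le K\e^2$ and $\e^{-2}|\bar x-\bar y|^2\le K\e^2$, matching the requirement $|x-y|\le K\e^2$ in (D1q). I would then pick $A$ sufficiently large and
\[
B \;=\; K\bigl(\e^2 + \tfrac{\mu_1+\mu_2}{\nu}\bigr),
\]
so that Lemma \ref{lem_BC} rules out the boundary condition at $\bar x$ and $\bar y$, forcing both equations to hold there. Applying the maximum principle for semicontinuous functions (Theorem 3.2 of \cite{cil}) produces $(p,X)\in\bar{J}^{2,+}u_1(\bar x)$ and $(q,Y)\in\bar{J}^{2,-}u_2(\bar y)$ with $p=D_x\phi(\bar x,\bar y)$, $q=-D_y\phi(\bar x,\bar y)$, satisfying the matrix bound of (D1q).

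The crucial step, and the one specifically tailored for the quasilinear setting, is to invoke the sharper estimate \eqref{pqpq} from Lemma \ref{lem_deriv} to obtain $|p-q|\le K(|p|\wedge|q|)\e^2+K(\e^2+B)$ and $|p|+|q|\le K(1+\e^2+B)$. This exactly matches the admissibility clause of (D1q) and is the place where the Lipschitz dependence of $\sigma(x,\cdot)$ in its gradient argument enters (via the $\e^{-2}B^2$ term present in (D1q), accounting for \eqref{p-q-term}). Combining (D1q) with (H3) applied to $F_1(\bar x,u_1(\bar x),p,X)\le 0\le F_2(\bar y,u_2(\bar y),q,Y)$ then yields, whenever $u_1(\bar x)\ge u_2(\bar y)$,
\[
\lambda\,(u_1(\bar x)-u_2(\bar y)) \;\le\; K_F(K)\bigl(\e^2+\delta_1+\tfrac{1}{\e^2}\delta_2^2+B+\tfrac{1}{\e^2}B^2\bigr).
\]

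Unravelling via $u_1(x)-u_2(x)\le\psi(\bar x,\bar y)+2B(d(\bar x)+d(\bar y))$ and substituting the chosen $B$, the right-hand side reduces to an expression of order $\e^2+\delta_1+\delta_2^2/\e^2+(\mu_1+\mu_2)/\nu+(\mu_1+\mu_2)^2/(\nu^2\e^2)$. The main obstacle, which is precisely what assumption (D1q) is engineered to accommodate, is the balance between the new $\e^{-2}B^2$ contribution and $\e^2$; optimising by choosing $\e^2$ equal to the larger of $\delta_2$ and $(\mu_1+\mu_2)/\nu$ (both $\le 1$ by our WLOG reduction) produces the stated bound $\lambda(u_1-u_2)\le C(\delta_1+\delta_2+\mu_1/\nu+\mu_2/\nu)$. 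As in Theorem \ref{main} the whole argument is symmetric in the two indices, so $\lambda_1$ may be replaced by $\lambda_1\vee\lambda_2$, completing the proof.
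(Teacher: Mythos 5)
Your proposal follows essentially the same route as the paper's proof: rerun the argument of Theorem \ref{main} with $\alpha=\beta=1$ and $\eta\equiv\e$, use \eqref{pqpq} of Lemma \ref{lem_deriv} to verify the gradient clause of (D1q), carry the extra $\e^{-2}B^2$ term through to the final estimate, and optimize in $\e$. The one point you gloss over --- and it is precisely the point, besides the new $\e^{-2}B^2$ term, that the paper singles out as differing from Theorem \ref{main} --- is that \eqref{pqpq} only holds under the smallness condition $\e^2\le\left[2B+K(1+A)\right]^{-1}$ stated in Lemma \ref{lem_deriv}; without it the clause $|p-q|\le K(|p|\wedge|q|)\e^2+K(\e^2+B)$ of (D1q) is not available. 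Since your $\e^2$ is chosen as the larger of $\delta_2$ and $(\mu_1+\mu_2)/\nu$, reducing only to $\delta_1,\delta_2,\mu_1/\nu,\mu_2/\nu\le 1$ does not guarantee this; one must instead reduce, without loss of generality, to $\delta_1,\delta_2,\mu_1/\nu,\mu_2/\nu\le\bar C^{-1}$ for a sufficiently large structural constant $\bar C$ (the excluded case again being trivial by the boundedness of $u_1,u_2$). With that adjustment your argument is complete and coincides with the paper's.
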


\noindent{\em Proof. }
The proof is similar to the Proof of Theorem \ref{main} with two
exceptions:
\begin{itemize}
\item[(i)] Assume $\delta_1, \delta_2, \frac{\mu_1}{\nu},
  \frac{\mu_2}{\nu}\leq \bar C^{-1}$ where $\bar C$ is big enough (the general
  case follows since $u_1,u_2$ are bounded). Since $\eps$
  is chosen in terms of $\delta_1, \delta_2, \frac{\mu_1}{\nu}, 
  \frac{\mu_2}{\nu}$, a suitable choice of $\bar C$ will ensure that
  $\eps$ is small enough such that \eqref{pqpq} of Lemma
  \ref{lem_deriv} holds. This estimate is needed before one 
  can apply (D1q).
\item[(ii)] At the end of the proof the following estimate will appear
  (remember $\eta=\eps$)
$$\lambda_1 (u_1(x)-u_2(x)) \le  K \Big(\eta^2 +\delta_1 +
\frac{1}{\e^2}\delta_2^2 +\frac{[\mu_1+ \mu_2
  \frac{\eta}{\e}]}{\nu} +\frac1{\eps^2}\Big(\frac{[\mu_1+ \mu_2
  \frac{\eta}{\e}]}{\nu}\Big)^2  \Big),$$
where the {\em new final term} in the right hand side of the inequality
is a consequence of the 
$\frac1{\eps^2}B^2$ term of (D1q). Minimizing $\eps$ like we did in
Theorem \ref{main} then gives the result.  $\hfill\Box$
\end{itemize}

As an example we consider an anisotropic quasilinear equation with
capillary boundary condition. The type of non-linearity appearing here
is similar to the non-linearity appearing in the mean curvature of
graph equation. 
\begin{align*}
-\tr\Big[\sigma\sigma^T\Big(I-\frac{Du\otimes
    Du}{1+|Du|^2}\Big)D^2u\Big]+\lambda u+f(x,u,Du)&=0
    \quad\text{in}\quad \Omega,\\
\frac{\del u}{\del n}-\bar\theta(x)(1+|Du|^2)^{1/2}&=0\quad \text{in}\quad
    \del\Omega,
\end{align*}
where $\bar\theta$ is Lipschitz continuous satisfying $|\bar\theta(x)|
\le \omega<1$ and $f$ satisfies the assumptions mentioned above. 
Assume $u_1$ and $u_2$ are Lipschitz solutions of this boundary value problem
with different $\sigma_1,\sigma_2,\theta_1,\theta_2$ but with same $f$ and
$\lambda$. Then we may apply Theorem \ref{ThmQlin} with
$\delta_1=0=\mu_1$, $\mu_2=|\bar\theta_1-\bar\theta_2|_0$, and 
$$\delta_2^2=\sup_{p\in\R^N}\left|\sqrt{\sigma_1\sigma_1^T\Big(I-\frac{p\otimes
    p}{1+|p|^2}\Big)}-\sqrt{\sigma_2\sigma_2^T\Big(I-\frac{p\otimes
    p}{1+|p|^2}\Big)}\right|^2,$$ 
to obtain
$$\lambda|u_1-u_2|\leq
C\Big(|\sigma_1-\sigma_2|+\frac
{1}{\nu}|\bar\theta_1-\bar\theta_2|_0\Big).$$ 

\begin{remark} 
\label{noplalpacian}
Neither the above assumptions on $\sigma$ nor assumption (D1q) is satisfied
 $p$-Laplacian equations. 

\end{remark}

\appendix
\section{The test function $\phi_a$}
\label{sec:phi}

\subsection{Construction} 
The construction of the test function  follows G. Barles \cite{ba2,ba3} tracking
some quantities more precisely than he needs to. We give
some more details (compared to \cite{ba2,ba3}) whenever we feel this
is helpful for the reader.  

First let $d$ be a $W^{3,\infty}$ extension of the (signed)
distance function from some neighborhood of $\del\Omega$ to $\R^N$ (by (H0) the
distance function is $W^{3,\infty}$ near $\del\Omega$). Furthermore we
may and will choose $d$ such that $0\leq d \leq 1$ and $|Dd(x)|\leq 1$
in $\bar\Omega$. 
We also extend the outward normal vector field $n$ of $\del \Omega$ to
all of $\R^N$ by setting $n(x):=-Dd(x)$ for $x\in\Omega$. An
important consequence of the $W^{3,\infty}$
regularity of $d$ and Taylor's Theorem, are the following inequalities:
\begin{align}
\label{w3}
\pm[d(x)-d(y)]\leq \pm(y-x)\cdot
n\Big(\frac{x+y}2\Big)+\frac1{24}|D^3d|_0|x-y|^3.
\end{align}

Next we extend $G_i, i=1,2$, to a neighborhood $V$ of $\del\Omega$ such
that properties (HB1) and (HB2) still hold here (possibly with
different constants $K$ and $\nu$). Then (HB1) and the intermediate
value theorem that implies that there exists unique solutions
$C_i(x,p)$, of
\begin{align}  
\label{C-def}
G_i(x,p+C_i(x,p) n(x)) = 0,\quad i=1,2,
\end{align}
for every $x$ (in the neighborhood $V$) and $p$. Note that
\begin{align*}
& G_i(x,p)-G_i(x,p+C_i(x,p))= G_i(x,p)=G_i(x,0)+(G_i(x,p)-G_i(x,0))\\ 
\intertext{and}
&G_i(x,p+C_i(y,q) n(x))-G_i(x,p+C_i(x,p) n(x))\\
&= G_i(x,p+C_i(y,q)n(x))=G_i(x,p+C_i(y,q) n(x))-G_i(y,q+C_i(y,q) n(y)),
\end{align*}
so by (HB1) and (HB2)
\begin{align} 
&\nu_i|C_i(x,p)|\leq K(1+|p|)\label{C-bnd}\\
\intertext{and}
\label{C-reg}
&\nu_i |C_i(x,p)-C_i(y,q)|\le K( 1+|p|+|q| )|x-y|+K|p-q|
\end{align}
for $x,y\in V$, $p,q\in\R^N$. 
Now we extend $C_i, i=1,2$, from $V$ to $\R^N$ (in the $x$ variable)
such that \eqref{C-reg} and \eqref{C-bnd} are preserved (possibly with bigger
$K$'s). Next note that  
\begin{align*}
&G_1(x,p+C_1(x,p) n(x))-G_1(x,p+C_2(x,p) n(x))\\
&= -G_1(x,p+C_2(x,p) n(x))=G_2(x,p+C_2(x,p) n(x))-G_1(x,p+C_2(x,p) n(x)).
\end{align*}
Therefore,  for $x\in\del\Omega$, by assumptions $(HB1)$ and (D2) we get
$$\nu_1 (C_1(x,p)- C_2(x,p))\le K_G \Big(\mu_1 +
\mu_2\Big[|p|+K(1+|p|)\Big]\Big).$$ 
We have proved:
\begin{lemma}
\label{propC}
Assume (H0), (HB1), and (HB2).

(a) There exist unique functions $C_1$ and $C_2$ such that 
equation \eqref{C-def} holds in a neighborhood of $\del\Omega$, and
the bounds \eqref{C-bnd} and \eqref{C-reg} hold for all $x,p\in\R^N$.

(b) If in addition (D2) holds, then there exists a constant $K_C>0$
such that for every $p\in\R^N$ and $x\in\del\Omega$, 
\begin{equation}
\label{diffC}
C_1(x,p)- C_2(x,p)\le \frac {K_C}{\nu_1\vee\nu_2}\Big( \mu_1 +
\mu_2(1+ |p|)\Big).
\end{equation}
\end{lemma}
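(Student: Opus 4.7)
The plan is to construct $C_i$ locally by solving \eqref{C-def} for $t$ along the normal line, derive both bounds there by comparing $G_i$ at two points that differ by a multiple of $n(x)$, then extend to $\R^N$; part (b) will follow by running the same one-line identity twice, once with $G_1$ and once with $G_2$.

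For part (a), I would first extend $G_i$ to a tubular neighborhood $V$ of $\del\Omega$ so that (HB1)--(HB2) persist (with possibly larger constants): for instance by composing with the $W^{3,\infty}$ nearest-point projection onto $\del\Omega$ provided by (H0). Fixing $(x,p) \in V \times \R^N$, (HB1) forces $t \mapsto G_i(x, p+tn(x))$ to be strictly increasing with slope at least $\nu_i$, hence a bijection of $\R$, and the intermediate value theorem produces a unique root $C_i(x,p)$. The bound \eqref{C-bnd} then comes from $G_i(x,p) = G_i(x,p) - G_i(x, p+C_i n(x))$: (HB1) bounds the left side below by $\nu_i |C_i(x,p)|$, while (HB2) applied at $(x,p)$ vs.\ a fixed reference point bounds $|G_i(x,p)|$ above by $K(1+|p|)$. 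For \eqref{C-reg} the key trick is to write $p+C_i(y,q)n(x) = (p+C_i(x,p)n(x)) + (C_i(y,q)-C_i(x,p))n(x)$, so (HB1) yields $|G_i(x,p+C_i(y,q)n(x))| \geq \nu_i |C_i(x,p)-C_i(y,q)|$; on the other hand this same quantity equals $G_i(x, p+C_i(y,q)n(x)) - G_i(y, q+C_i(y,q)n(y))$, to which (HB2) applies, with $|n(x)-n(y)| \leq K|x-y|$ following from $n = -Dd \in W^{2,\infty}$ and the already-established \eqref{C-bnd} absorbing $|C_i(y,q)|$ into $1+|q|$.

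To extend $C_i$ from $V \times \R^N$ to $\R^N \times \R^N$ preserving both bounds, I would compose in the first variable with a Lipschitz retraction onto the closure of a slightly shrunk copy of $V$: since retraction does not increase distances in $x$, neither the prefactor $(1+|p|+|q|)$ nor the factor $|x-y|$ in \eqref{C-reg} deteriorates, and \eqref{C-bnd} is inherited trivially. A coordinate-free McShane extension would not obviously respect the $p$-dependent prefactor; the retraction route side-steps that nuisance and is the main technical subtlety of the whole lemma.

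For part (b), fix $x \in \del\Omega$ and exploit the identity $G_1(x,p+C_1 n(x)) = 0 = G_2(x, p+C_2 n(x))$ in two symmetric ways. Writing the chain $-G_1(x, p+C_2 n(x)) = G_1(x, p+C_1 n(x)) - G_1(x, p+C_2 n(x)) = G_2(x, p+C_2 n(x)) - G_1(x, p+C_2 n(x))$, the first equality is $\geq \nu_1 (C_1-C_2)$ by (HB1) in the sign $C_1 \geq C_2$, while the last expression is $\leq K_G(\mu_1 + \mu_2|p+C_2 n(x)|) \leq K(\mu_1 + \mu_2(1+|p|))$ by (D2) combined with \eqref{C-bnd} for $|C_2|$; this yields $\nu_1 (C_1-C_2) \leq K(\mu_1 + \mu_2(1+|p|))$. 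Running the analogous chain on $G_2$ rather than $G_1$, i.e.\ $G_2(x, p+C_1 n(x)) - G_2(x, p+C_2 n(x)) = G_2(x, p+C_1 n(x)) - G_1(x, p+C_1 n(x))$, the left side is $\geq \nu_2(C_1-C_2)$ by (HB1) applied to $G_2$ and the right side is again controlled by (D2) and \eqref{C-bnd}, giving $\nu_2(C_1-C_2) \leq K(\mu_1 + \mu_2(1+|p|))$. Taking the maximum produces $(\nu_1 \vee \nu_2)(C_1-C_2) \leq K(\mu_1 + \mu_2(1+|p|))$, which is exactly \eqref{diffC}; this symmetric use of (HB1) is the reason the denominator in \eqref{diffC} can be improved from $\nu_1$ alone to $\nu_1 \vee \nu_2$.
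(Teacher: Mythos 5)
Your proposal is correct and follows essentially the same route as the paper: extend $G_i$ to a tubular neighborhood, get $C_i$ from (HB1) plus the intermediate value theorem, derive \eqref{C-bnd} and \eqref{C-reg} from the identities $G_i(x,p)=G_i(x,p)-G_i(x,p+C_i(x,p)n(x))$ and $G_i(x,p+C_i(y,q)n(x))=G_i(x,p+C_i(y,q)n(x))-G_i(y,q+C_i(y,q)n(y))$, and prove (b) by evaluating $G_1$ at $p+C_2(x,p)n(x)$. One point where you go beyond the paper: its displayed computation only yields $\nu_1(C_1-C_2)\le K(\mu_1+\mu_2(1+|p|))$, while the stated estimate \eqref{diffC} has $\nu_1\vee\nu_2$ in the denominator; your symmetric second chain with $G_2$ at $p+C_1(x,p)n(x)$ is exactly the missing half that justifies this. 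A minor caveat: a global Lipschitz retraction of $\R^N$ onto a non-convex tubular neighborhood is not entirely obvious to construct, and multiplying $C_i$ by a Lipschitz cutoff supported in $V$ achieves the extension preserving \eqref{C-bnd}--\eqref{C-reg} more directly.
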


To build smooth functions from $C_1$ and $C_2$ we will use the
following sophisticated regularization due to G. Barles \cite{ba3}:
\begin{definition}
\label{GReg}
Let $a>0$. For a function $C(x,p)$ on $\R^N\times\R^N$ we define
\begin{equation*}
C_a(x,p):=\iint _{\R^N\times \R^N} C(y,q)
\rho\Big((x-y)\frac{\Gamma}{\Lambda}\Big) \rho\Big(
\frac{p-q}{\Lambda}\Big) \frac{\Gamma^N}{\Lambda^{2N}} \,dy\,dq,
\end{equation*}
where 
\begin{equation*}
\Lambda=(a^2 + (p\cdot n(x))^2)^{\frac{1}{2}}\quad \hbox{and}\quad
\Gamma=(1+ |p|^2)^{\frac{1}{2}},
\end{equation*}
and $\rho\in C^{\infty}(\R^N)$ is a nonnegative function with total mass 1
and support in $|x|\leq 1$.
\end{definition}

\begin{lemma}
\label{lemguy}
If $C(x,p)$ satisfies \eqref{C-reg} for $x,p\in\R^N$ and $0<a\le 1$,
then for any $x, p\in \R^N,$
\begin{gather}
\label{Ccoestim}
|C_a (x,p)|\le K \Gamma\quad \text{and}\quad |C_a
 (x,p)-C(x,p)|\le K(a  + |p\cdot n(x)|),\\
\label{Cfoestim}
|D_x C_a(x,p)|\le K \Gamma, \quad |D_p
 C_a(x,p)|\le K,\\
|D_{xx} C_a(x,p)|\le K \frac{\Gamma^2}{\Lambda},
\quad
|D_{xp} C_a(x,p)|\le K \frac{\Gamma}{\Lambda},
\quad
|D_{pp} C_a(x,p)|\le \frac{K}{\Lambda}.
\end{gather}
\end{lemma}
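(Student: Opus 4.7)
The plan is to perform the change of variables $z=(x-y)\Gamma/\Lambda$ and $r=(p-q)/\Lambda$ in the definition of $C_a$, giving the equivalent form
$$C_a(x,p)=\iint C(x-z\Lambda/\Gamma,\;p-r\Lambda)\,\rho(z)\rho(r)\,dz\,dr,$$
and then to exploit the bounds \eqref{C-bnd} and \eqref{C-reg} on $C$ together with careful tracking of the $x$- and $p$-derivatives of $\Lambda$ and $\Gamma$. For the zeroth-order bounds, on the supports $|z|,|r|\le 1$ one has $|q|\le |p|+\Lambda\le K\Gamma$ (since $a\le 1$ and $\Lambda\le a+|p\cdot n|\le 1+|p|$), so \eqref{C-bnd} yields $|C_a|\le K\Gamma$; and since $|x-y|\le\Lambda/\Gamma$ and $|p-q|\le\Lambda$ on the supports, \eqref{C-reg} together with $(1+|p|+|q|)\le K\Gamma$ gives $|C_a-C|\le K\Lambda\le K(a+|p\cdot n(x)|)$.

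For the first-order bounds I use difference quotients directly on the form above. A short calculation from $\Lambda^2=a^2+(p\cdot n(x))^2$ gives $|D_x\Lambda|\le K|p|$, $|\Lambda_p|\le 1$, and $|(\Lambda/\Gamma)_p|\le K/\Gamma$ (the last using $\Lambda\le 2\Gamma$ when $a\le 1$). Perturbing $x$ by $h$ shifts the first argument of $C$ in the integrand by $O(|h|)$ and the second by $O(|p||h|)$, so \eqref{C-reg} yields $|D_xC_a|\le K\Gamma$. Perturbing $p$ by $h'$ shifts the first argument by $O(|h'|/\Gamma)$ and the second by $O(|h'|)$, and \eqref{C-reg} gives $|D_pC_a|\le K$.

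For the second-order bounds I return to the kernel form $C_a(x,p)=\iint C(y,q)K(x,p;y,q)\,dy\,dq$ with $K=\rho((x-y)\Gamma/\Lambda)\rho((p-q)/\Lambda)\Gamma^N/\Lambda^{2N}$. Each $x$-derivative of $K$ is controlled by $K(\Gamma/\Lambda)\cdot K$ and each $p$-derivative by $K(1/\Lambda)\cdot K$, with the derivatives of $\Lambda$ and $\Gamma$ internal to $K$ contributing no worse factor thanks to $|p\cdot n|/\Lambda\le 1$ and $|D_x\Lambda|\le K|p|$. A naive bound on $|D^2K|$ with two raw differentiations is one factor of $\Lambda/\Gamma$ (respectively $\Lambda$) too weak to match the target. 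The missing factor is recovered by an integration by parts in $y$ or $q$: one derivative of the kernel is transferred onto $C$, where the Lipschitz regularity \eqref{C-reg} gives $|D_yC|\le K\Gamma$ and $|D_qC|\le K$ a.e., contributing only a bounded factor rather than a full $\Gamma/\Lambda$ or $1/\Lambda$. This yields $|D_{xx}C_a|\le K\Gamma^2/\Lambda$, $|D_{xp}C_a|\le K\Gamma/\Lambda$, and $|D_{pp}C_a|\le K/\Lambda$.

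The main obstacle is the bookkeeping in this last step: one must verify by direct computation that no combination of derivatives of $\Lambda$ (in the exponent $\Lambda^{-2N}$, in the arguments of $\rho$, and in the mollification widths) produces terms worse than $K\Gamma/\Lambda$ per $x$-derivative or $K/\Lambda$ per $p$-derivative. The key geometric fact that makes this go through, preventing blowup as $a\to 0$, is the uniform bound $|p\cdot n(x)|/\Lambda\le 1$ built into the definition of $\Lambda$.
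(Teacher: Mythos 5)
Your proposal is correct and follows the same route the paper intends: the paper's own "proof" is the single remark that the lemma "follows from the classical properties of convolution and the regularity of $C$ together with the choice of $\Lambda$ and $\Gamma$," and your change of variables, the use of \eqref{C-bnd}--\eqref{C-reg} with $|q|\le|p|+\Lambda\le K\Gamma$ and $\Lambda\le\Gamma$, the derivative bounds $|D_x\Lambda|\le K|p|$, $|D_p\Lambda|\le 1$ via $|p\cdot n|/\Lambda\le1$, and the transfer of one derivative onto the Lipschitz function $C$ before differentiating the kernel a second time are exactly the ingredients that make that remark into a proof. The only caveat is the one you already flag: in the second-order estimates the pieces of $DK$ coming from the $x$- and $p$-dependence of $\Lambda$ are of the same order as the main piece, so the "one derivative on $C$ first, then one kernel derivative" organization must be applied to those pieces as well, which your stated bounds on $D^2\Lambda$, $D^2\Gamma$ do support.
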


The proof follows from the classical properties of convolution and the regularity
of $C$ \eqref{C-reg} together with the choice of $\Lambda$ and $\Gamma$.

Now remember that $d$ is a $W^{3,\infty}$ extension of the distance
function, and let $C_{2,a}$ be the smooth function obtained
from Definition \ref{GReg} with $C=C_2$. The full test function
takes the following form:
\begin{definition}[The test function $\phi_a$]  
\label{deftestfunct}
\begin{align*}
\begin{aligned}
   \phi_a(x,y)&=\frac{1}{\e^2} |x-y|^2 +\frac{A}{\e^2}
   \left(d(x)-d(y)\right)^2 -B(d(x)+d(y))\\
&\quad-C_{2,a}(\frac{x+y}{2},
   \frac{2(x-y)}{\e^2}) (d(x)-d(y)),
\end{aligned}
\end{align*}
where $A,B\geq 0$ are constants.
\end{definition}

\subsection{Properties} 
In the next 3 lemmas we the state main properties of the test function
$\phi_a$ that we need in this paper.
\begin{lemma}[Positivity]
\label{lem_pos}
Assume (H0), (HB1), (HB2), and let $\phi$ be defined in Definition
\ref{deftestfunct}. If $A$ is big enough (not depending on $\eps,a , B$), then 
\begin{align*}
\phi_a(x,y)\geq \frac{1}{2\eps^2}|x-y|^2-K_0\eps^2-B(d(x)+d(y)).
\end{align*}
\end{lemma}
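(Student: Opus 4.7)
The plan is to bound the only non-sign-definite term in $\phi_a$, namely $-C_{2,a}\bigl(\frac{x+y}{2},\frac{2(x-y)}{\e^2}\bigr)(d(x)-d(y))$, using the zero-order estimate from Lemma \ref{lemguy} together with the Lipschitz property of $d$, and then absorb the resulting terms into the two positive quadratic pieces $\frac{1}{\e^2}|x-y|^2$ and $\frac{A}{\e^2}(d(x)-d(y))^2$ via Young's inequality. The role of $A$ large is precisely to give enough $(d(x)-d(y))^2/\e^2$ to swallow the cross term, while $K_0$ absorbs the leftover purely $\e^2$-size error. No information about $d$ beyond $|Dd|\le 1$ is used.

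Concretely, from Lemma \ref{lemguy} and $\Gamma=(1+|p|^2)^{1/2}\le 1+|p|$ with $p=\frac{2(x-y)}{\e^2}$,
\begin{equation*}
\Bigl|C_{2,a}\Bigl(\tfrac{x+y}{2},\tfrac{2(x-y)}{\e^2}\Bigr)\Bigr|\le K\Bigl(1+\tfrac{2|x-y|}{\e^2}\Bigr),
\end{equation*}
and $|d(x)-d(y)|\le|x-y|$ since $|Dd|\le1$. Hence
\begin{equation*}
\Bigl|C_{2,a}(\tfrac{x+y}{2},\tfrac{2(x-y)}{\e^2})(d(x)-d(y))\Bigr|\le K|d(x)-d(y)|+\tfrac{2K}{\e^2}|x-y|\,|d(x)-d(y)|.
\end{equation*}
Two applications of $2ab\le a^2+b^2$ then give
\begin{equation*}
K|d(x)-d(y)|\le \tfrac{A}{2\e^2}(d(x)-d(y))^2+\tfrac{K^2}{2A}\e^2,
\end{equation*}
\begin{equation*}
\tfrac{2K}{\e^2}|x-y|\,|d(x)-d(y)|\le \tfrac{1}{2\e^2}|x-y|^2+\tfrac{2K^2}{\e^2}(d(x)-d(y))^2.
\end{equation*}

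Plugging these bounds into the definition of $\phi_a$ and choosing $A\ge 4K^2$ so that $A-\tfrac{A}{2}-2K^2\ge 0$, I obtain
\begin{equation*}
\phi_a(x,y)+B(d(x)+d(y))\ \ge\ \tfrac{1}{2\e^2}|x-y|^2+\tfrac{A/2-2K^2}{\e^2}(d(x)-d(y))^2-\tfrac{K^2}{2A}\e^2,
\end{equation*}
which yields the claim with $K_0:=K^2/(2A)$. The only delicate point, and the reason the $\frac{A}{\e^2}(d(x)-d(y))^2$ term was placed in the test function to begin with, is that $C_{2,a}$ grows linearly in $|p|$, so the cross term $\frac{|x-y|}{\e^2}|d(x)-d(y)|$ is of the same order as $\frac{1}{\e^2}|x-y|^2$ and must be split carefully between the two quadratic reserves; fixing the split so that at most half of each reserve is consumed is exactly what forces the threshold $A\ge 4K^2$, independent of $\e$, $a$, and $B$.
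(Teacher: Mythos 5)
Your argument is correct and is essentially the paper's own proof: both rely on the zero-order bound $|C_{2,a}(x,p)|\le K(1+|p|)$ from Lemma \ref{lemguy}, the Lipschitz bound $|d(x)-d(y)|\le K|x-y|$, and Young's inequality to absorb the cross term into half of each of the two quadratic reserves, with $A$ chosen large independently of $\e$, $a$, $B$. You have merely written out explicitly the splitting that the paper compresses into the phrase ``by Young's inequality one can take $A$ big enough,'' so nothing further is needed.
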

\begin{proof}
By \eqref{Ccoestim}, $|C_{2,a}(x,p)|\leq C(1+|p|)$ with $C$
independent of $a$. So by Young's inequality one can take $A$ big
enough to insure that
\begin{equation*}
\frac{1}{2\e^2} |x-y|^2 + \frac{A}{2\e^2} \left(d(x)-d(y)\right)^2 -
  C_{2,a}(\frac{x+y}{2}, \frac{2(x-y)}{\e^2}) (d(x)-d(y)) \ge -K_0\eps^2,
\end{equation*}
which proves the Lemma.
\end{proof}

\begin{lemma}[Boundary conditions]
\label{lem_BC}
Assume (H0), (HB1), (HB2), (D2), $0<\eps,\eta, a\leq1$,
and let $\phi_a$ be defined in Definition \ref{deftestfunct}. Then for
any $x,y\in \bar{\Omega}$  such that $|x-y|\le K_1\eta\eps$,
there exists a $K\geq0$ only depending on the data and on $K_1$, such that if 
$$ B = K(\eta^2+\eps^2+a) + \frac
K{\nu_1\vee\nu_2}\Big(\mu_1+\mu_2\frac{\eta}{\eps}\Big)\qquad
\text{and}\qquad  A=K,
$$
then
\begin{equation}
\label{noboundx}
G_1(x,D_x \phi_a(x,y))>0 \quad \text{ if } x\in\partial \Omega,
\end{equation}
\begin{equation}
\label{noboundy}
G_2(x,-D_y \phi_a(x,y))<0 \quad \text{ if } y \in\partial \Omega.
\end{equation}
\end{lemma}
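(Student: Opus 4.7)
The plan is to prove the $x$-boundary case \eqref{noboundx} in detail; the $y$-case \eqref{noboundy} follows by an essentially identical computation for $-D_y\phi_a$ with the simplification that no transfer between $G_1$ and $G_2$ is needed. The whole argument rests on showing that at a boundary point $x\in\partial\Omega$ the gradient $D_x\phi_a(x,y)$ has a large positive component in the outward normal direction $n(x)$, so that the Neumann monotonicity (HB1) forces $G_1$ to be strictly positive once the remaining error terms are controlled. The first step is therefore to differentiate: setting $\bar x := (x+y)/2$ and $\bar p := 2(x-y)/\eps^2$ and using $n=-Dd$ together with $d(x)=0$, a routine computation gives
$$D_x\phi_a(x,y) = \bar p + \Big[\tfrac{2Ad(y)}{\eps^2} + B + C_{2,a}(\bar x,\bar p)\Big] n(x) + d(y)\,D_x C_{2,a}(\bar x,\bar p).$$

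Next I apply (HB1) to peel off the large positive normal coefficient $\tfrac{2Ad(y)}{\eps^2}+B$, and then (D2) to replace the remaining $G_1$ by $G_2$ modulo an error of size $K_G(\mu_1+\mu_2|\cdot|)$. What is left is $G_2\bigl(x,\bar p + C_{2,a}(\bar x,\bar p)n(x)\bigr)$ plus a $d(y)D_xC_{2,a}$ perturbation controlled by (HB2) and \eqref{Cfoestim}. The defining relation $G_2(x,\bar p + C_2(x,\bar p)n(x)) = 0$ reduces the estimate to bounding $|C_{2,a}(\bar x,\bar p) - C_2(x,\bar p)|$: one part is controlled by \eqref{Ccoestim} (giving $K(a+|\bar p\cdot n(\bar x)|)$) and the other by \eqref{C-reg} (giving $K(1+|\bar p|)|x-y|/\nu_2$, which is $O(\eta^2/\nu_2)$ under $|x-y|\leq K_1\eta\eps$). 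The crucial term $|\bar p\cdot n(\bar x)|$ is handled with \eqref{w3}: since $d(x)=0$ it yields $(x-y)\cdot n(\bar x) = d(y) + O(|x-y|^3)$, hence
$$|\bar p \cdot n(\bar x)| \leq \tfrac{2d(y)}{\eps^2} + K\eta^3\eps.$$
The $d(y)/\eps^2$ piece is absorbed by the positive $\nu_1\cdot 2Ad(y)/\eps^2$ coming from (HB1) provided $A$ is chosen large enough, while $K\eta^3\eps$ is already an $O(\eta^2)$-type error.

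Collecting everything, and using $|\bar p|\leq K\eta/\eps$ together with the bound $|D_xC_{2,a}|\leq K\Gamma \leq K(1+\eta/\eps)$ from \eqref{Cfoestim}, one arrives at
$$G_1\bigl(x,D_x\phi_a(x,y)\bigr) \geq \nu_1 B - K(\eta^2+\eps^2+a) - \tfrac{K}{\nu_1\vee\nu_2}\bigl(\mu_1+\mu_2\tfrac{\eta}{\eps}\bigr),$$
which is strictly positive once the constant $K$ in the definition of $B$ is taken large (depending only on the data and $K_1$). The $y$-case \eqref{noboundy} follows by the same strategy applied to $-D_y\phi_a$: the coefficient of $n(y)$ now comes out as $-\bigl[\tfrac{2Ad(x)}{\eps^2}+B\bigr]+C_{2,a}$, so (HB1) applied to the negative normal direction forces $G_2$ to be strictly \emph{negative}, and since the reference point $\bar p+C_{2,a}(\bar x,\bar p)n(y)$ is already an almost-zero of $G_2(y,\cdot)$, no transfer via (D2) is required. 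The main obstacle throughout is the term $|\bar p\cdot n(\bar x)|$: its naive bound $K\eta/\eps$ is not small, and only the refined $W^{3,\infty}$ Taylor identity \eqref{w3}, combined with $d(x)=0$, lets us replace it by $2d(y)/\eps^2$ and have it absorbed by the parameter $A$; this is precisely the reason why the auxiliary $(A/\eps^2)(d(x)-d(y))^2$ term has been inserted into the test function.
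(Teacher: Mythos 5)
Your argument is essentially the paper's own proof: the same decomposition of $D_x\phi_a$ into $\bar p$ plus a normal component whose positive part ($2Ad(y)/\eps^2+B$) dominates, the same use of the Taylor identity \eqref{w3} to convert $|\bar p\cdot n(\bar x)|$ into $2d(y)/\eps^2$ plus a cubic error absorbed by $A$, and the same final choice of $B$; the only (cosmetic) difference is that you pass to $G_2$ via (D2) and use the zero of $C_2$, while the paper stays with $G_1$, uses the zero of $C_1$, and imports (D2) through the bound on $C_1-C_2$ of Lemma \ref{propC}(b) --- the two bookkeepings are equivalent. One small slip: your displayed formula for $D_x\phi_a$ drops the term $\tfrac{2}{\eps^2}D_pC_{2,a}(\bar x,\bar p)\,d(y)$ coming from differentiating $C_{2,a}(\cdot,2(x-y)/\eps^2)$ in its second argument; by \eqref{Cfoestim} it is of size $Kd(y)/\eps^2$ and is harmless, but only because it is absorbed by the $A$-term through exactly the mechanism you already describe, so it should be carried along in the tangential remainder rather than omitted.
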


\begin{proof}
We only prove \eqref{noboundx}, the proof of \eqref{noboundy} is
similar but easier due to the choice of $C_{2,a}$ in the test
function. Note that $d(x)=0$, $d(y)-d(x)=|d(x)-d(y)|$, and 
remember that $n=-Dd$. We have 
\begin{align}
\nonumber
D_x\phi_a(x,y) =&\, \frac{2(x-y)}{\e^2} +  \Big[\frac{1}{2}
  D_xC_{2,a}(X,p) +\frac{2}{\e^2} D_pC_{2,a}(X,p)\Big](d(y)-d(x))\\ 
&\,+\Big[C_{2,a}(X,p)+B+\frac{2A}{\e^2}(d(y)-d(x))\Big]
 n(x)\nonumber \\
=& \,p+R_1+[C_1(x,p)+R_2]n(x),\label{phix}
\end{align} 
where $p=\frac1{\eps^2}|x-y|$, $X=\frac12(x+y)$, and
$$R_2:=[-C_1(x,p)+C_2(x,p)]+[-C_2(x,p)+C_{2,a}(X,p)]+B+\frac{2A}{\e^2}(d(y)-d(x)).$$
According to \eqref{Ccoestim}, \eqref{Cfoestim} and \eqref{w3}, the
second term in $R_2$ is bounded below by
$$-K(1+|p|) \frac{|x-y|}{2} - K \big(a + |p\cdot
n(\frac{x+y}2)|\big)\geq
-K\Big(\eta^2+\eps^2+a+\frac{|d(x)-d(y)|}{\eps^2}\Big).$$ 
Here we have also used that $p=\frac2{\eps^2}|x-y|$ and $|x-y|\leq
K\eps\eta$. \\
According to \eqref{diffC}, the first term in $R_2$ is bounded from below by 

\begin{align*}
- \frac {K_C}{\nu_1\vee\nu_2}\Big( \mu_1 + \mu_2(1+ |p|)\Big)
\geq -\frac
K{\nu_1\vee\nu_2}\Big(\mu_1+\mu_2\frac{\eta}{\eps}\Big).
\end{align*}This means that $R_2>0$ if $B> K(\eta^2+\eps^2+a)+\frac
K{\nu_1\vee\nu_2}\Big(\mu_1+\mu_2\frac{\eta}{\eps}\Big)$ and $A\geq
\frac K2.$ 
By \eqref{Cfoestim} of Lemma \ref{lemguy}, the regularity of $d$,
$|x-y|\leq K\eps\eta$, we also find that
\begin{equation*}
|R_1|\leq\Big[K(1+|p| )+ \frac1{\eps^2}K\Big] (d(y)-d(x)) \le
 K\Big(\eta^2 +\eps^2+ \frac{|d(x)-d(y)|}{\e^2}\Big).
\end{equation*}
By (HJB1), (HJB2), \eqref{phix}, \eqref{C-def}, and the above
estimates and choices of $A,B$,
\begin{align*}
&G_1(x,D_x\phi_a(x,y))\ge  G_1(x,p+C_1(x,p)) + \nu_1 R_2 -  K|R_1|\\
&\geq  0+ \nu_1\Big[B-K(\eta^2+\eps^2+a)-\frac
    K{\nu_1\vee\nu_2}\Big(\mu_1+\mu_2\frac{\eta}{\eps}\Big)+(2A-K)\frac{|d(x)-d(y)|}{\eps^2}\Big]\\ 
&\quad-K\Big(\eta^2+\eps^2+\frac{|d(x)-d(y)|}{\eps^2}\Big),
\end{align*}
and the right hand side is strictly positive if 
$$\nu_1 B =
(1+\nu_1)K(\eta^2+\eps^2+a) + \frac
{\nu_1K}{\nu_1\vee\nu_2}\Big(\mu_1+\mu_2\frac{\eta}{\eps}\Big)\quad\text{and}\quad 
2\nu_1A=(1+\nu_1)K.$$ 
\end{proof}

\begin{lemma}[Derivatives]
\label{lem_deriv}
Assume (H0), (HB1), (HB2), and let $x,y\in \bar{\Omega}$ and $\phi_a$
be defined in Definition \ref{deftestfunct}. Then 
\begin{equation}
\label{pmqest}
| D_x \phi_a(x,y) +D_y\phi_a(x,y)|\leq K(\frac {|x-y|^2}{\eps^2}+\eps^2)+2B,
\end{equation}
\begin{equation}
\label{pmqest2}
|D_x \phi_a(x,y)| +|D_y\phi_a(x,y)|\leq
 K(\eps^2+\frac {|x-y|^2}{\eps^2}+\frac {|x-y|}{\eps^2}) + 2B.
\end{equation}
Furthermore if $\frac1{\eps^2}|x-y|^2\leq C$ and $A$ big enough
(independent of $a,\eps,B$) then 
\begin{equation}
\label{lwrbd}
|D_x \phi_a(x,y)| ,|D_y\phi_a(x,y)|\ge
 \frac{|x-y|}{2\eps^2}\left(1-\eps^2\left[B+K(1+A)\right]\right)-K\left(\eps^2+B\right), 
\end{equation}
and if in addition $\eps^2\leq \left[2B+K(1+A)\right]^{-1}$ then
\begin{equation}
\label{pqpq}
|D_x \phi_a(x,y) +D_y\phi_a(x,y)|\leq
 K|D_x \phi_a(x,y)|\wedge|D_y\phi_a(x,y)||x-y| + K(\eps^2+B).
\end{equation}
Finally, if $|x-y|\leq K\eta\eps$ and $0<\eta,\eps,a\leq1$ then
\begin{align}
& D^2 \phi_a (x,y)\nonumber\\
&\le 
\ \frac{K}{\e^2}\big(1+\frac{\eps}{a} \eta^3\big)
\begin{pmatrix}
              Id& -Id  \\  
              -Id & Id  
\end{pmatrix}
+K\Big((1+\frac{\eps}{a} \eta^3)( \eta^2+\eps^2 )+B\Big)
\begin{pmatrix}
              Id& 0  \\  
              0 & Id  
\end{pmatrix}\label{scnd}
\end{align}
\end{lemma}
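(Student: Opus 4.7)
The plan is to decompose $\phi_a=\phi_1+\phi_2+\phi_3$ with $\phi_1(x,y)=\tfrac{1}{\e^2}|x-y|^2$, $\phi_2(x,y)=\tfrac{A}{\e^2}(d(x)-d(y))^2-B(d(x)+d(y))$, and $\phi_3(x,y)=-C_{2,a}(X,p)(d(x)-d(y))$ where $X=\tfrac{x+y}{2}$ and $p=\tfrac{2(x-y)}{\e^2}$; then differentiate blockwise via the chain rule and bound each term using the regularity $|Dd|_0\le 1$, $|D^2d|_0+|D^3d|_0\le K$, together with \eqref{Ccoestim}--\eqref{Cfoestim} of Lemma \ref{lemguy}.

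For \eqref{pmqest}, in the symmetric sum $D_x\phi_a+D_y\phi_a$ the leading $\pm 2(x-y)/\e^2$ from $\phi_1$ cancels. The $B$-term contributes $-B(Dd(x)+Dd(y))$, of size at most $2B$; the $A$-term contributes $\tfrac{2A}{\e^2}(d(x)-d(y))(Dd(x)-Dd(y))$, of size $\le KA|x-y|^2/\e^2$; and the $\phi_3$-terms combine into $-C_{2,a}(Dd(x)-Dd(y))-(d(x)-d(y))D_X C_{2,a}$, of size $\le K(1+|p|)|x-y|\le K|x-y|+K|x-y|^2/\e^2$. Young's inequality $K|x-y|\le K\e^2+K|x-y|^2/\e^2$ then gives \eqref{pmqest}. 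Estimate \eqref{pmqest2} follows by the triangle inequality applied to $D_x\phi_a$ alone, producing the extra principal term $2|x-y|/\e^2$ while keeping all the other bounds. For \eqref{lwrbd}, I isolate $2(x-y)/\e^2$ in $D_x\phi_a$ and subtract the remaining terms, which are of order $(1+A)|x-y|/\e^2\cdot(|x-y|+\e^2)+B$; under the hypotheses $|x-y|^2/\e^2\le C$ and $\e^2\le[2B+K(1+A)]^{-1}$, at least half of the leading term survives. Feeding the resulting lower bound $|x-y|\le K\e^2(|D_x\phi_a|\wedge|D_y\phi_a|)+K(\e^4+B\e^2)$ back into \eqref{pmqest} then yields \eqref{pqpq}.

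The heart of the lemma is the matrix inequality \eqref{scnd}. Computed in $2N\times 2N$ block form, the $\phi_1$-part is exactly $\tfrac{2}{\e^2}\bigl(\begin{smallmatrix}I&-I\\-I&I\end{smallmatrix}\bigr)$, the $B$-part is block-diagonal of size $\le KB$, and the $A$-part is bounded above, after writing $Dd(y)=Dd(x)+O(|x-y|)$ and using $|Dd|\le 1$, by $\tfrac{2A}{\e^2}\bigl(\begin{smallmatrix}I&-I\\-I&I\end{smallmatrix}\bigr)$ plus a diagonal error of order $A|x-y|/\e^2\le KA\eta/\e$, absorbed into the $(\eta^2+\e^2)$-diagonal term using $\e\le\eta\le 1$. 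The $\phi_3$-part decomposes by the chain rule into three principal blocks of type $\tfrac{1}{4}D_{XX}C_{2,a}$, $\tfrac{1}{\e^2}D_{Xp}C_{2,a}$, and $\tfrac{4}{\e^4}D_{pp}C_{2,a}$, each multiplied by $g:=d(x)-d(y)$, plus single-derivative products $DF\otimes Dg$ and the term $-C_{2,a}D^2g$. The $D_{pp}$-block carries precisely the pattern $\bigl(\begin{smallmatrix}I&-I\\-I&I\end{smallmatrix}\bigr)$ (because $p$ depends only on $x-y$) and feeds the off-diagonal bound $\tfrac{K}{\e^2}(1+\tfrac{\e}{a}\eta^3)$; the other blocks, together with $DF\otimes Dg$ and $-C_{2,a}D^2g$, are dominated by the diagonal bound via Lemma \ref{lemguy}, $|x-y|\le K\e\eta$, and $|p|\le K\eta/\e$.

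The main obstacle is the estimate of the $\tfrac{4g}{\e^4}D_{pp}C_{2,a}$ block. A naive bound via $|g|\le|x-y|\le K\e\eta$ and $|D_{pp}C_{2,a}|\le K/\Lambda\le K/a$ yields $K\eta/(\e^3a)$, far larger than the target $\tfrac{K}{\e^2}(1+\tfrac{\e}{a}\eta^3)$. The correct estimate exploits Barles' choice $\Lambda=(a^2+(p\cdot n(x))^2)^{1/2}$: Taylor's formula \eqref{w3}, together with the Lipschitz regularity of $n$ (to compare $n(X)$ and $n(x)$), gives $|g|\le\tfrac{\e^2}{2}|p\cdot n(x)|+K|x-y|^2+K|x-y|^3$. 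Using $|p\cdot n(x)|\le\Lambda$ this produces the crucial cancellation $|g|/\Lambda\le\tfrac{\e^2}{2}+K|x-y|^2/a+K|x-y|^3/a$, which with $|x-y|\le K\e\eta$ turns $\tfrac{4|g|}{\e^4\Lambda}$ into a bound of the claimed form. Combining this with the block-matrix identities used for the $\phi_2$-part completes \eqref{scnd}.
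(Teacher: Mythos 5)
Your overall strategy (blockwise differentiation plus Lemma \ref{lemguy}) is the paper's, written without its change of variables $\Phi_{a,\e}(X,Y,Z,T)$, and your treatment of \eqref{pmqest}, \eqref{pmqest2} and of the $D_{pp}$-block in \eqref{scnd} is essentially correct: the cancellation $|d(x)-d(y)|/\Lambda\le \e^2/2+K|x-y|^3/a$ coming from \eqref{w3} is indeed the heart of the matrix estimate. (One correction there: $\Lambda$ is built from $n$ evaluated at the \emph{first argument} of $C_{2,a}$, which in the test function is the midpoint $X=\tfrac{x+y}{2}$ --- the same point at which \eqref{w3} delivers the normal --- so no comparison of $n(X)$ with $n(x)$ is needed. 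Inserting one creates an extra $K|x-y|^2/a$ in your bound for $|g|$, which after multiplication by $4/\e^4$ gives $K\eta^2/(\e^2 a)$ and does \emph{not} fit inside $\tfrac{K}{\e^2}(1+\tfrac{\e}{a}\eta^3)$.) There are, however, two genuine gaps.

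First, your argument for \eqref{lwrbd} fails. The terms you subtract from the leading $2(x-y)/\e^2$ are not of order $(1+A)\tfrac{|x-y|}{\e^2}(|x-y|+\e^2)+B$: the $A$-term contributes $\tfrac{2A}{\e^2}(d(x)-d(y))Dd(x)$, and $|d(x)-d(y)|$ is genuinely of order $|x-y|$ (take $y=x+tn(x)$), so this term has size $\tfrac{2A}{\e^2}|x-y|$ --- the same order as the leading term but multiplied by the large constant $A$; similarly $-C_{2,a}(X,p)Dd(x)$ has size up to $K(1+|p|)\sim K|x-y|/\e^2$. A termwise lower bound on $|D_x\phi_a|$ alone therefore gives at best $\tfrac{|x-y|}{\e^2}(2-2A-K)$, which is useless, whereas \eqref{lwrbd} asserts that the $A$-dependence enters only through $\e^2 K(1+A)$. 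The paper's (necessary) device is to bound $(D_x\phi_a-D_y\phi_a)\cdot(x-y)$ from below: the Taylor identity $(Dd(x)+Dd(y))\cdot(x-y)=2(d(x)-d(y))+O(|x-y|^3)$ turns the dangerous $A$-term into the \emph{positive} square $\tfrac{4A}{\e^2}(d(x)-d(y))^2$, into which the $C_{2,a}$-terms are absorbed by Young's inequality exactly as in Lemma \ref{lem_pos}; Cauchy--Schwarz plus \eqref{pmqest} then yield \eqref{lwrbd}. This idea is absent from your sketch.

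Second, in \eqref{scnd} you assert that a diagonal error ``of order $KA\eta/\e$'' can be absorbed into the $K(\eta^2+\e^2)$ diagonal term; it cannot, since $\eta/\e\ge 1$ is unbounded as $\e\to0$. Terms of exactly this size do occur --- e.g.\ $D_Z\Phi_{a,\e}\,\mathrm{diag}(D^2d(x),-D^2d(y))$ with $|D_Z\Phi_{a,\e}|\le K(1+A)|x-y|/\e^2+K$, and the cross blocks built from $D_{XY}\Phi_{a,\e}$ and $D_{YZ}\Phi_{a,\e}$ --- and they must be handled by a weighted Young inequality that splits the quadratic form between the off-diagonal piece $\tfrac{K}{\e^2}|\zeta-\kappa|^2$ (which is available with a large constant) and a genuinely $O(\eta^2+\e^2)$ diagonal piece; this is the paper's treatment of $M_3$, $M_5$, $M_6$, $M_7$ with the weight $\theta=\eta\sqrt{1+\tfrac{\e}{a}\eta^3}$, plus the $W^{3,\infty}$-regularity of $d$ to control $\mathrm{diag}(0,D^2d(x)-D^2d(y))$. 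Without this splitting the claimed matrix inequality does not follow from your estimates. (For the rank-one part of the $A$-block your error is actually $KA|x-y|^2/\e^2\le KA\eta^2$, which \emph{does} fit the diagonal; the problem is confined to the terms that are genuinely linear in $|x-y|/\e^2$.)
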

\begin{proof}
To simplify the computations, we make a change of
variables and define $\Phi_{a,\e}$ by 
$$\phi_a(x,y) = \Phi_{a,\e}(X,Y,Z,T)$$ 
where $\quad X= \frac{x+y}{2},\quad Y =x-y,\quad Z =d(x)-d(y),\quad
T=d(x)+d(y).$ We see that
\begin{equation}
\label{Phidef}
 \Phi_{a,\e} (X,Y,Z,T) = \frac{|Y|^2}{\e^2} - C_{2,a}(X,\frac{2 Y}{\e^2}) Z + \frac{A}{\e^2}Z^2 - BT,
\end{equation}
and straightforward computations lead to 
\begin{align}
\label{primo}
\begin{aligned}
D_X \Phi_{a,\e} (X,Y,Z,T) &= -D_xC_{2,a}(X,\frac{2 Y}{\e^2}) Z, \\ 
D_Y\Phi_{a,\e} (X,Y,Z,T) &= \frac{2 Y}{\e^2}- \frac{2 }{\e^2}D_p 
C_{2,a}(X,\frac{2 Y}{\e^2}) Z, \\
 D_Z \Phi_{a,\e} (X,Y,Z,T) &= -
C_{2,a}(X,\frac{2 Y}{\e^2}) + \frac{2 A}{\e^2}Z,\\ 
D_T \Phi_{a,\e} (X,Y,Z,T)&= -B,
\end{aligned}
\end{align}
and
\begin{align}
D_x \phi_a(x,y)=&\frac{1}{2} D_X \Phi_{a,\e}  +D_Y\Phi_{a,\e} + D_Z
\Phi_{a,\e} Dd(x) +D_T \Phi_{a,\e} Dd(x),\label{DXeqn} \\ 
D_y\phi_a(x,y) =&
\frac{1}{2} D_X \Phi_{a,\e}  -D_Y\Phi_{a,\e} + D_Z \Phi_{a,\e} (-Dd(y))
+D_T \Phi_{a,\e} Dd(y).\label{DYeqn}
\end{align}

First note that estimate \eqref{pqpq} easily follows from
\eqref{pmqest} and \eqref{lwrbd}. We start by proving estimate
\eqref{pmqest}. Using the fact that $n =-Dd$, we see that 
$$ D_x \phi_a(x,y) +D_y\phi_a(x,y) =D_X \Phi_{a,\e} +  D_Z \Phi_{a,\e}
(n(y)-n(x)) + B (n(x)+n(y)).$$
The first term on the right hand side can be estimated by
\eqref{Cfoestim} of Lemma \ref{lemguy},
$$|D_X \Phi_{a,\e} |\le K \Big(1+\frac{|Y|}{\eps^2}\Big) |d(x)-d(y)|,$$
and the second term by \eqref{Ccoestim},
$$\Big| C_{2,a}\Big( X, \frac{2 Y}{\e^2}\Big)\Big|\le K
\Big(1+\frac{|Y|}{\eps^2}\Big)\quad\text{and}\quad  
|D_Z \Phi_{a,\e}|\le K \Big(1+\frac{|Y|}{\eps^2}\Big) + \frac{2
A}{\e^2}|d(x)-d(y)|.$$ 
Combining these estimates, using the regularity of $d$, and $Y=x-y$
then leads to 
\begin{equation*}
|D_x \phi_a(x,y) +D_y\phi_a(x,y)| \le (K+2A|Dd|_0) \Big(1+\frac{|Y
 |}{\eps^2}\Big) |x-y|+ 2|Dd|_0 B, 
\end{equation*}
and estimate \eqref{pmqest} follows 
from Young's inequality. In a similar way, we can also prove \eqref{pmqest2}.

We proceed to prove the lower bound \eqref{lwrbd} and start by
estimating $|D_x \phi_a(x,y) -D_y\phi_a(x,y)|$. Observe that
\begin{gather*}
D_x \phi_a(x,y) -D_y\phi_a(x,y) = 2\big[ \frac{2(x-y)}{\eps^2} - \frac{2}{\eps^2} D_p C_{2,a}(X,\frac{2 Y}{\e^2}) (d(x)-d(y))\big]\\
+(Dd(x)+Dd(y))\big[- C_{2,a}(X,\frac{2 Y}{\e^2}) +
  \frac{2A}{\eps^2}(d(x)-d(y))\big]- B(Dd(x)-Dd(y))
\end{gather*}
Now  using a Taylor expansion and regularity of $d$, we see that
$$(Dd(x)+Dd(y))\cdot(x-y)\leq 2(d(x)-d(y))+ \frac 18|D^3d|_0|x-y|^3,$$
and after applying also \eqref{Ccoestim} we get
\begin{align*}
&(D_x \phi_a(x,y) -D_y\phi_a(x,y))\cdot(x-y)\\
&\geq 4
  \frac{|x-y|^2}{\eps^2}-\frac{4}{\eps^2} D_p C_{2,a}(X,\frac{2
    Y}{\e^2})\cdot(x-y) (d(x)-d(y))\\
&\quad + 2 C_{2,a}(X,\frac{2 Y}{\e^2}) (d(x)-d(y)) 
+\frac{4A}{\eps^2}(d(x)-d(y))^2\\
&\quad
-|D^3d|_0|x-y|^3\left(\frac
A{\eps^2}|Dd|_0|x-y|+K\left(1+\frac{|x-y|}{\eps^2}\right)\right)\\  
&\quad - B (Dd(x) -Dd(y))\cdot(x-y).
\end{align*}
Using Young's inequality as in Lemma \ref{lem_pos} and taking $A$ even
bigger if necessary (but not depending on $a, \eps, B$), we have
($\eps\leq 1$)
\begin{align*}
&(D_x \phi_a(x,y) -D_y\phi_a(x,y))\cdot(x-y)\\
& \ge  \frac{|x-y|^2}{\eps^2}-  B |D d|_0|x-y|^2
-K(1+A)(1+\eps^2)\frac{|x-y|^4}{\eps^2}\\
&= 
\frac{|x-y|^2}{\eps^2}\left(1-\eps^2\left(B+K(1+A)\frac{|x-y|^2}{\eps^2}\right)\right),  
\end{align*}
and Cauchy-Schwarz inequality immediately yields
\begin{equation*}
\label{plowrbd}
|D_x \phi_a(x,y) -D_y\phi_a(x,y))| \ge
 \frac{|x-y|}{\eps^2}\left(1-\eps^2\left(B+K(1+A)\frac{|x-y|^2}{\eps^2}\right)\right). 
\end{equation*}
Now  \eqref{lwrbd} follows by combining the last inequality and \eqref{pmqest},
\begin{align*}
|D_x \phi_a(x,y) |
&\ge
 \frac{|x-y|}{2\eps^2}\left(1-\eps^2\left(B+K+K(1+A)\frac{|x-y|^2}{\eps^2}\right)\right)-K\left(\eps^2+B\right). 
\end{align*}

Now we prove estimate \eqref{scnd}.
A straightforward calculation using (\ref{primo}) yields
\begin{gather*}
D_{XX} \Phi_{a,\e} (X,Y,Z,T) = -D^2_{xx}C_{2,a}(X,\frac{2 Y}{\e^2}) Z ,\\
 D_{XZ} \Phi_{a,\e} (X,Y,Z,T) =-D_{x}C_{2,a}(X,\frac{2 Y}{\e^2}),\\
D_{XY} \Phi_{a,\e} (X,Y,Z,T) = -\frac{2 }{\e^2} D_{xp}C_{2,a}(X,\frac{2
Y}{\e^2}) Z,\\
 D_{YY} \Phi_{a,\e} (X,Y,Z,T) = \frac{2 }{\e^2}Id
-\Big(\frac{2 }{\e^2}\Big)^2 D_{pp}C_{2,a}(X,\frac{2 Y}{\e^2}) Z, \\
D_{YZ} \Phi_{a,\e} (X,Y,Z,T) = -\frac{2 }{\e^2}D_pC_{2,a}(X,\frac{2
Y}{\e^2}) ,\quad D_{ZZ }\Phi_{a,\e} (X,Y,Z,T) = \frac{2 A}{\e^2}Id,\\
D_{TX} \Phi_{a,\e}  = D_{TY} \Phi_{a,\e}  =D_{TZ} \Phi_{a,\e}
 =D_{TT} \Phi_{a,\e} =0.
\end{gather*}
We will estimate these terms using Lemma \ref{lemguy} and
\eqref{w3}.  For example, one has 
\begin{align*}
|  D_{XX} \Phi_{a,\e} (X,Y,Z,T)| &\le
   K\frac{\Gamma^2}{\Lambda}[d(x)-d(y)|\leq K\frac{\Gamma^2}{\Lambda}\Big(\e^2
   |p\cdot n(X)| +
   |D^3d|_0|x-y|^3\Big),
\end{align*}
where $p=\frac {2Y}{\eps^2}$. In our case 
  $\Lambda=[a^2 + (p\cdot n(X))^2]^{\frac{1}{2}}$,
  and $\Gamma=(1+ |p|^2)^{\frac{1}{2}}$, and hence
\begin{align}
| D_{XX} \Phi_{a,\e} (X,Y,Z,T)| &\le K\Big( \e^2\Gamma^2 +
\frac{\Gamma^2}{a}|x-y|^3\Big)\nonumber\\
&\le K\big(\eps^2+\frac {|x-y|^2}{\eps^2}\big)\big(1+\frac{\eps}{a}
\frac{|x-y|^3}{\eps^3}\big).\label{E1} 
\end{align} 
By carefully doing computations like above, one can
prove that
\begin{align}
|D_{YY} \Phi_{a,\e} (X,Y,Z,T)|&\leq \frac{K}{\e^2}\big(1+\frac{\eps}{a}
\frac{|x-y|^3}{\eps^3}\big) ,\label{E2}\\
|D_{YZ} \Phi_{a,\e} (X,Y,Z,T)|&,|D_{ZZ} \Phi_{a,\e}
(X,Y,Z,T) | \leq \frac K{\eps^2}, \label{E2.2}\\
|D_{XZ} \Phi_{a,\e} (X,Y,Z,T)|&\leq K(1+\frac{|x-y|}{\eps^2})\label{E3} \\
|D_{XY} \Phi_{a,\e} (X,Y,Z,T)|&\leq
K(1+\frac{|x-y|}{\eps^2})\big(1+\frac{\eps}{a} \frac{|x-y|^3}{\eps^3}\big). \label{E4} 
\end{align}
Now we compute the matrix $D^2\phi_a (x,y)$  from \eqref{DXeqn} and
\eqref{DYeqn}: 
\begin{align*}
D^2\phi_a (x,y)& =M_1+M_2+ M_3+M4+M_5+M_6 +M_7+ M_8,\\
\intertext{where}
M_1=  &    \begin{pmatrix}
             D^2_{YY}\Phi_{a,\e}& -  D^2_{YY}\Phi_{a,\e}\\  
              -  D^2_{YY}\Phi_{a,\e}&   D^2_{YY}\Phi_{a,\e}
              \end{pmatrix},\\
 M_2= & \,\frac{1}{4} \begin{pmatrix}
             D^2_{XX}\Phi_{a,\e}&  D^2_{XX}\Phi_{a,\e} \\  
               D^2_{XX}\Phi_{a,\e}&  D^2_{XX}\Phi_{a,\e}
              \end{pmatrix},  \\
   M_3= & \begin{pmatrix}
             D^2_{XY}\Phi_{a,\e}& 0\\  
             0&  -D^2_{XY}\Phi_{a,\e}
              \end{pmatrix}, \\          
       M_4=& \,  D^2_{ZZ}\Phi_{a,\e}\begin{pmatrix}
           Dd(x)\otimes Dd(x)&- Dd(x)\otimes Dd(y)\\  
           - Dd(y)\otimes Dd(x)&  Dd(y)\otimes Dd(y)
                         \end{pmatrix},             \\ 
M_5= &\,D^2_{ZY}\Phi_{a,\e}\otimes\begin{pmatrix}  2 Dd(x)&
           - Dd(x)-Dd(y)\\ 
	   - Dd(x)-Dd(y)& 
           2 Dd(y)
              \end{pmatrix}, \\
           M_6= &\, D^2_{ZX}\Phi_{a,\e}\otimes\begin{pmatrix}
              Dd(x)&
              \frac12(Dd(x)-Dd(y))\\   
            \frac{1}{2} ( Dd(x)-Dd(y))&- Dd(y) 
              \end{pmatrix},
              \\
         M_7=& \,    D_Z\Phi_{a,\e}
 \begin{pmatrix}
              D^2d(x)& 0 \\  
              0& -D^2d(y)
\end{pmatrix},
 \\
         M_8=&     -B 
\begin{pmatrix}
              D^2d(x)& 0 \\  
              0& D^2d(y)
\end{pmatrix}.
\end{align*}
It can easily be seen that $M_1$ (use \eqref{E2}), $M_2$ (use
\eqref{E1}), and $M_8$ can be bounded from above by a matrix of the form
\eqref{scnd}. Note that
 $$(M_3(\zeta,\kappa),(\zeta,\kappa)) =(D^2_{XY}\Phi_{a,\e}(\zeta-\kappa),(\zeta+\kappa))\le \frac {1}{\theta^2}
 |D^2_{XY}\Phi_{a,\e}|^2 |\zeta-\kappa|^2 + \theta^2 |\zeta+\kappa|^2,$$
 where $\theta =\eta \sqrt{1+\frac{\eps}{a} \eta^3}$ and hence by \eqref{E3} $M_3$ is also bounded from above by
 \eqref{scnd}. Now we write
 $$M_7=D_Z\Phi_{a,\e} \left(\begin{pmatrix}
              D^2d(x)& 0 \\  
              0& -D^2d(x)
\end{pmatrix}+\begin{pmatrix}
              0& 0 \\  
              0& D^2d(x)-D^2d(y)
\end{pmatrix}\right),$$
and handle the first part of $M_7$ like we did with $M_3$. The second
part can handled using the $W^{3,\infty}$-regularity of $d$ together
with the first order estimates of $\Phi_{a,\e}$. We proceed with $M_4$:
\begin{align*}
(M_4(\zeta,\kappa),(\zeta,\kappa))&=\frac{2A}{\e^2}\Big(\zeta\cdot
 Dd(x)-\kappa\cdot Dd(y)\big)^2\\
&\le 
 \frac{2A}{\e^2} |Dd|_0^2|\zeta-\kappa|^2 + |D^2d|_0^2 \eta^2 (|\zeta|^2+|\kappa|^2).
\end{align*}
The two remaining terms can be treated analogously using also
 \eqref{E2} and \eqref{E3}.
This ends the proof of  the Lemma \ref{lem_deriv}.
\end{proof}

\section{Acknowledgments}
The authors are very grateful to Guy Barles for proposing the problem and
many enlightening discussions along the way.

\end{document}